\newtheorem{lemma}{Lemma}[section]
\newtheorem{theorem}[lemma]{Theorem}
\newtheorem{proposition}[lemma]{Proposition}
\newtheorem{corollary}[lemma]{Corollary}
\theoremstyle{definition}
\newtheorem{definition}[lemma]{Definition}
\numberwithin{equation}{section}
\numberwithin{figure}{section}
\begin{document}


\title{\huge On characterization of poised nodes for a space of bivariate functions}         
\author{Hayk Avdalyan, Hakop Hakopian}        
\date{}          

\maketitle

\begin{abstract}
There are several examples of spaces of univariate functions for which we have a characterization of all sets of knots which are
poised for the interpolation problem. For the standard spaces of univariate polynomials, or spline functions the mentioned results are well-known. In contrast with this there are no such results in the bivariate case. As an exception one may consider only the Pascal classic theorem, in the interpolation theory interpretation. In this paper we consider a space of bivariate piecewise linear functions, for which we can readily find out whether the given node set is poised or not. The main tool we use for this purpose is the reduction by a basic subproblem, introduced in this paper.
\end{abstract}
{\bf Key words:} Bivariate interpolation problem, poisedness, fundamental function, bivariate piecewise linear function, reductions by basic subproblems.

{\bf Mathematics Subject Classification (2010):} \\
primary: 41A05, 41A63, 15A06.

\section{Introduction and background}
Consider the interpolation problem with a finite dimensional space of univariate functions $S$ and a set of knots $t_1,\ldots,t_m\in \mathbb{R},$ that is
for a given data $\bar c:=\{c_1,\ldots,c_m\}$ find a function $s\in S$ satisfying the conditions
\begin{equation}\label{a1}
 s(t_i)=c_i,\quad i=1,\ldots,m.
\end{equation}
We say that the set of knots is poised for $S$ if for any data $\bar c$ there is a unique function $s\in S$ satisfying the conditions \eqref{a1}.
A necessary condition of the poisedness is
$$m=\dim S.$$

There are several cases of spaces $S$ of univariate functions for which we have a characterization of all poised sets.

Suppose that $S\equiv \pi_n$ is the space of polynomials of degree at most $n.$ We have that $\dim\pi_n=n+1.$ Then, according to the Lagrange theorem, all sets of knots $\{t_1,\ldots,t_{n+1}\}$
are poised.

Now suppose that $S\equiv S_{n,m}:=S_{n;x_0,\ldots,x_m}$ is the space of spline functions of order $n$ and $a:=x_1<\cdots<x_m=:b$ are the knots of the space (see, e.g., \cite{BHS}).
Here $s\in S_{n,m}$ means that $s$ is piecewise polynomial function of degree at most $n-1,$ $s$ vanishes outside of the segment $[a,b]$ and $s$ belongs to the differentiability class $\mathbb{C}^{(n-2)}.$
We have that $\dim S_{n,m}=m-n+1$ and the set of interpolation knots $\{t_1,\ldots,t_{m-n+1}\}$ is poised if and only if the following conditions are satisfied:
$$x_i<t_i<x_{i+n},\quad i=1,\ldots,m-n+1.$$
This result is due to Schoenberg and Whitney \cite{SW1,SW2}.
In the univariate case there are also several characterization results concerning the trigonometric interpolation.

In contrast with this there are no such results in the bivariate case. As an exception one may consider only the Pascal classic theorem, in the interpolation theory interpretation (see, e.g., \cite{HJZ}.)
To present it let $S\equiv \Pi_2$ be the space of bivariate polynomials of total degree at most $2.$
We have that $\dim\Pi_2=6.$ Let also $\Pi_1$ be the space of bivariate linear functions, $\dim\Pi_1=3.$
Then consider any set of $6$ nodes $\{A_1,\ldots,A_6\}$ in the plane. Construct $3$ new nodes as follows:
$$B_1=\ell_{12}\cap\ell_{45},\quad B_2=\ell_{23}\cap\ell_{56}\quad B_3=\ell_{34}\cap\ell_{61},$$
where $\ell_{ij}$ is the line passing through the nodes $A_i$ and $A_j.$  Then according to the Pascal theorem the $6$ nodes $\{A_1,\ldots,A_6\}$ are lying in a conic if and only if the $3$ nodes $\{B_1,B_2,B_3\}$ are collinear. To arrange the case connected with the parallel pairs of lines one may replace the plane with the projective one. The interpolation version of this result is:

\begin{theorem}[Pascal] Any $6$ nodes $\{A_1,\ldots,A_6\}$ in the plane are poised for $\Pi_2$ if and only if the set of respective $3$ nodes $\{B_1,B_2,B_3\}$ is poised for $\Pi_1.$
\end{theorem}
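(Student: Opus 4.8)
The plan is to translate the geometric Pascal theorem into a linear-algebra statement about poisedness and to exploit the equivalence that poisedness for $\Pi_2$ is the same as saying no nonzero conic passes through all six nodes, while poisedness for $\Pi_1$ means the three nodes $\{B_1,B_2,B_3\}$ are not collinear (equivalently, they are affinely independent and do not lie on a common line). So the first step is to record these two reformulations: since $\dim \Pi_2 = 6$, the six nodes $\{A_1,\ldots,A_6\}$ are poised for $\Pi_2$ exactly when the only $q\in\Pi_2$ vanishing at all of them is $q\equiv 0$, i.e. the six nodes lie on no conic (including degenerate ones); and since $\dim\Pi_1 = 3$, the three nodes $\{B_1,B_2,B_3\}$ are poised for $\Pi_1$ precisely when they are not collinear.

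With these reformulations in hand, the theorem becomes the assertion: the six nodes lie on a conic if and only if $B_1,B_2,B_3$ are collinear. This is exactly the content of the classical Pascal theorem (and its converse, the Braikenridge–Maclaurin theorem) as already stated in the excerpt, so the second step is to invoke that geometric statement directly. I would phrase the argument as a chain of contrapositives: the six nodes are \emph{not} poised for $\Pi_2$ iff some nonzero conic passes through them iff (by Pascal together with its converse) the points $B_1,B_2,B_3$ are collinear iff $\{B_1,B_2,B_3\}$ is \emph{not} poised for $\Pi_1$. Negating both ends gives the desired equivalence.

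The step requiring the most care is the bookkeeping around degenerate and exceptional configurations, since the clean dictionary ``poised $\Leftrightarrow$ no conic'' hides several cases. I would need to address what happens when some of the defining lines $\ell_{12},\ell_{45},\ldots$ coincide or are parallel, when pairs of the $A_i$ coincide, and when the intersection points $B_j$ fail to be well defined in the affine plane. The excerpt already signals the intended remedy: pass to the projective plane so that parallel lines meet at a point at infinity and every pair of distinct points determines a unique line. I would therefore carry out the whole argument projectively, noting that poisedness for $\Pi_1$ and $\Pi_2$ and the collinearity condition are all projectively meaningful, and then remark that the affine statement follows. The main obstacle is thus not the core equivalence, which is immediate from Pascal, but verifying that the degenerate conics (pairs of lines) are correctly accounted for on the ``not poised'' side and that the construction of the $B_j$ remains valid in all configurations; once the projective setting handles the geometry uniformly, the linear-algebraic equivalence closes the proof.
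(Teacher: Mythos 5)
Your argument is correct and is precisely the reasoning the paper intends: the paper states this theorem as classical background without giving a formal proof, but the paragraph preceding it sets up exactly your dictionary (poised for $\Pi_2$ iff no conic passes through the six nodes, poised for $\Pi_1$ iff $B_1,B_2,B_3$ are non-collinear) and then invokes the classical Pascal theorem together with its converse, passing to the projective plane to handle parallel lines. There is no discrepancy to report.
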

Note that the latter poisedness condition with $\Pi_1$ merely means that $B_1,B_2,B_3$ are not collinear.

Next we are going to introduce the space of bivariate functions we will consider in this paper. For this we need
some preliminaries.

Let us define a strip of triangles.
Fix a sequence of points in the plane
$$\mathcal V :=\left\{V_1,\ldots, V_{n+2}\right\}$$ for the vertices of triangles.
Then consider the $n$ triangles
\begin{equation}\label{tr}
 \Delta_i:=\left[V_i,V_{i+1},V_{i+2}\right],\ i=1,\ldots,n.
\end{equation}
Note that by a triangle  we mean the \emph{closed} set bounded by the sides of the triangle.
The sequence of the triangles
\begin{equation*}\label{btr}
 {{\bar\triangle}}:=\left\{\Delta_1,\ldots, \Delta_n\right\}
\end{equation*} makes a triangulation of the following strip (see Fig. \ref{pic1})
$$|{\bar\triangle}|:=\cup_{i=1}^n\Delta_i.$$ Sometimes it is convenient to call itself $\bar\triangle$ a strip, too.

\begin{figure}[ht] 
\centering
\includegraphics[scale=0.5]{strip.PNG}
\caption{The strip $|{\bar\triangle}|$ }\label{pic1}
\end{figure}
Here we require that
the intersection of any neighboring triangles $\Delta_i$ and $\Delta_{i+1},\ i=1,\ldots,n-1,$ is the common side $\left[V_{i+1},V_{i+2}\right],$
any pair of triangles $\Delta_i$ and $\Delta_{i+2},\ i=1,\ldots,n-2,$ have a single common point which is the vertex $V_{i+2},$ and
all the other pairs of the triangles are disjoint.

It is easily seen that the sides $\left[V_i,V_{i+2}\right],\ i=1,\ldots, n,$ together with the sides $\left[V_1,V_2\right]$ and $\left[V_{n+1},V_{n+2}\right],$ form the boundary of the strip $|{\bar\triangle}|$  (see Fig. 1). We call these sides \emph{boundary} sides. The remaining sides of the triangles are called \emph{interior} sides of the strip.
Let us call also the sides $\left[V_1,V_2\right]$ and $\left[V_{n+1},V_{n+2}\right],$ the left and right (boundary) sides of the strip $|{\bar\triangle}|,$ respectively, and denote
$$l({\bar\triangle}):=\left[V_1,V_2\right],\quad r({\bar\triangle}):=\left[V_{n+1},V_{n+2}\right].$$
For a triangle $\Delta_i$ given in \eqref{tr} we call the sides $\left[V_i,V_{i+1}\right]$ and $\left[V_{i+1},V_{i+2}\right],$ the left and right sides of it, respectively.

Denote by $S({\bar\triangle})$ the linear space of continuous piecewise linear functions on ${\bar\triangle}.$
More precisely, $s\in S({\bar\triangle})$ means that

i) $s\vert_{\Delta_i} \in \Pi_1,\ i=1,\ldots,n.$

ii) $s\in C(|\bar\triangle|).$

Here $s\vert_D$ means the restriction of $s$ on $D.$

\begin{definition} A set of nodes ${\mathcal A} :=\left\{A_1,\ldots, A_{m}\right\}$ is called poised for the space $S({\bar\triangle})$ if
for any data $\{c_i,\ i=1,\ldots,m\}$ the interpolation problem
\begin{equation}\label{intpr}
  s(A_i)=c_i,\ i=1,\ldots,m,
\end{equation}
has exactly one solution $s\in S({\bar\triangle}).$
\end{definition}
Let us denote the interpolation problem \eqref{intpr} by $\left\{{\bar\triangle}, \mathcal A\right\}.$

The aim of this paper is the characterization of all poised sets for the space $S({\bar\triangle}).$
The following is a necessary condition of the poisedness:
\begin{equation}\label{dim}
 \#\mathcal A = m=\dim S({\bar\triangle}).
\end{equation}

To prove this consider the fundamental functions of the interpolation $s_i^\star\in S({\bar\triangle}) ,\ i=1,\ldots, m,$ defined by the interpolation conditions
$$s_i^\star(A_j)=\delta_i^j,\ i,j=1,\ldots,m,$$
where $\delta$ is the symbol of Kronecker.

Obviously the fundamental functions are linearly independent and for the solution of the interpolation problem \eqref{intpr} we have the following formula of Lagrange:
$$ s=\sum_{i=1}^{m}c_i s_i^\star.$$
Thus, the fundamental functions form a basis for $S({\bar\triangle})$ and we get \eqref{dim}.

Now, let us show that the set of vertices $\mathcal V$ is a poised set for $S({\bar\triangle}).$
Indeed, having the values of a linear function at the vertices of a triangle $\Delta_{i},\ i=1,\ldots,n,$ we recover it in a unique way on the triangle.
On the other hand it is easily seen that the recovered piecewise linear function is continuous on $|{\bar\triangle}|.$
Thus, the dimension of $S({\bar\triangle})$ equals to the number of the vertices in the strip $|{\bar\triangle}|:$
\begin{equation}\label{dim1}
 \dim S({\bar\triangle})=n+2.
\end{equation}
In view of \eqref{dim}, we obtain that any poised set $\mathcal A$ for the space $S({\bar\triangle})$ consists of $n+2$ nodes:
\begin{equation}\label{A=n+2}
 \#\mathcal A=n+2.
\end{equation}
Let us call interpolation problem $\left\{{\bar\triangle}, \mathcal A\right\}$ satisfying this condition \emph{exact}.\hfill\break
In the cases $\#\mathcal A<n+2$ and $\#\mathcal A>n+2$ we call the problem $\left\{{\bar\triangle}, \mathcal A\right\}$ \emph{underdetermined} and \emph{overdetermined}, respectively.

Denote by $$\sigma_i^\star,\ i=1,\ldots,n+2,$$
the fundamental polynomials with respect to the poised set $\mathcal V.$
They form a basis for the space $S({\bar\triangle}).$
Thus we have the following representation for any piecewise linear function $s\in S({\bar\triangle}):$
$$ s=\sum_{i=1}^{n+2}s(V_i) \sigma_i^\star.$$
In view of this representation the interpolation problem \eqref{intpr} reduces to $m$ linear equations with $n+2$ unknowns, which are the values a piecewise linear function
at vertices $\mathcal V.$
Hence an exact interpolation problem $\left\{{\bar\triangle}, \mathcal A\right\}$ is posed if and only if the following \emph{Vandermonde determinant} does not vanish:
\begin{equation}\label{vdnz}
 V_{\left\{{\bar\triangle}, \mathcal A\right\}}\neq 0,
\end{equation}
where
$$
V_{\left\{{\bar\triangle}, \mathcal A\right\}} = \left|\hspace{-.3cm}\begin{matrix}&  \sigma_1^\star(A_1)& \cdots &\sigma_1^\star(A_{n+2})\\
& \sigma_2^\star(A_1) & \cdots &\sigma_2^\star(A_{n+2})\\
& &\vdots &\\
& \sigma_{n+2}^\star(A_1) & \cdots &\sigma_{n+2}^\star(A_{n+2})\\
\end{matrix}\right|.
$$
Thus our main problem can be formulated in terms of Vandermonde determinant in the following way:

\noindent Characterize all exact sets $\mathcal A$ for which the Vandermonde determinant does not vanish, i.e., \eqref{vdnz} holds.

The following two propositions are basic Linear Algebra facts.
\begin{proposition}\label{0p1}Given an exact problem $\left\{{\bar\triangle}, \mathcal A\right\}.$ Then each of the following conditions is equivalent to the poisedness of $\left\{{\bar\triangle}, \mathcal A\right\}$:\\
i) All fundamental functions $s_i^\star\in {\bar\triangle} ,\ i=1,\ldots, n+2,$ exist.\\
ii) $s\in S({\bar\triangle}),\ s\vert_\mathcal A=0 \Rightarrow s=0.$
\end{proposition}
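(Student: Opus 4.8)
The plan is to prove the equivalences by exploiting the Vandermonde determinant characterization established just above, namely that an exact problem $\left\{{\bar\triangle}, \mathcal A\right\}$ is poised if and only if $V_{\left\{{\bar\triangle}, \mathcal A\right\}}\neq 0$. Both conditions (i) and (ii) are standard reformulations of the nonvanishing of this determinant, so the whole proposition reduces to recognizing that poisedness, existence of fundamental functions, and the trivial-kernel property are three equivalent faces of the statement ``the associated linear system has a unique solution for every right-hand side.''

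First I would recall that, after expanding any $s\in S({\bar\triangle})$ in the vertex basis $\sigma_i^\star$, the interpolation problem \eqref{intpr} becomes a square linear system in the $n+2$ unknowns $s(V_1),\ldots,s(V_{n+2})$ whose coefficient matrix is exactly the Vandermonde matrix appearing in $V_{\left\{{\bar\triangle}, \mathcal A\right\}}$. For condition (i), I would note that constructing the fundamental function $s_i^\star$ amounts to solving this system with the right-hand side $(\delta_i^1,\ldots,\delta_i^{n+2})$. The existence of all $n+2$ fundamental functions is therefore equivalent to the solvability of the system for each standard basis vector of $\mathbb{R}^{n+2}$, which for a square matrix is equivalent to invertibility, i.e.\ to $V_{\left\{{\bar\triangle}, \mathcal A\right\}}\neq 0$, and hence to poisedness. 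For condition (ii), I would observe that $s\vert_{\mathcal A}=0$ means the vertex-value vector of $s$ lies in the kernel of the same Vandermonde matrix, so the implication ``$s\vert_{\mathcal A}=0\Rightarrow s=0$'' says precisely that this kernel is trivial, which for a square matrix is again equivalent to $V_{\left\{{\bar\triangle}, \mathcal A\right\}}\neq 0$.

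I expect no serious obstacle here, since the content is purely the elementary linear-algebra equivalence, for a square matrix $M$, among the statements ``$M$ is invertible,'' ``$Mx=b$ is solvable for every $b$ (in particular for the standard basis vectors),'' and ``$\ker M=\{0\}$.'' The only point requiring a little care is the bijection between the finite-dimensional space $S({\bar\triangle})$ and $\mathbb{R}^{n+2}$ via the vertex values: one must invoke \eqref{dim1} together with the fact (already established in the excerpt) that $\mathcal V$ is poised, so that the map $s\mapsto\bigl(s(V_1),\ldots,s(V_{n+2})\bigr)$ is a linear isomorphism. Once this identification is in place, conditions (i) and (ii) translate verbatim into the two standard characterizations of invertibility, and the proof closes immediately.
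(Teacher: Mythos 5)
Your proof is correct: the paper itself offers no argument for this proposition, dismissing it as a ``basic Linear Algebra fact,'' and your reduction to the invertibility of the Vandermonde matrix $V_{\left\{{\bar\triangle}, \mathcal A\right\}}$ via the vertex-value isomorphism $s\mapsto\bigl(s(V_1),\ldots,s(V_{n+2})\bigr)$ is exactly the standard argument the paper's preceding setup anticipates. Nothing is missing; the one point you flag as needing care (that the vertex-value map is an isomorphism) is indeed the only nontrivial input, and it is already established in the paper via the poisedness of $\mathcal V$ and \eqref{dim1}.
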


\begin{proposition}\label{0p2}
Given a problem $\left\{{\bar\triangle}, \mathcal A\right\}.$ Then the following hold:\\
i) If the problem is underdetermined then there is a function $s\in S({\bar\triangle})$ such that $s\vert_\mathcal A=0, \ \hbox{and}\ s\neq 0.$\\
ii) If the problem is overderdetermined then there is a node in $\mathcal A$ for which no fundamental function exists.
\end{proposition}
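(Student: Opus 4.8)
The plan is to handle both parts through a single linear map, the \emph{evaluation map}
$$L\colon S({\bar\triangle})\to\mathbb{R}^m,\qquad L(s):=\bigl(s(A_1),\ldots,s(A_m)\bigr),$$
which is linear since point evaluation is linear and $S({\bar\triangle})$ is a linear space. Everything then follows from the rank--nullity theorem applied to $L$, together with the dimension count $\dim S({\bar\triangle})=n+2$ recorded in \eqref{dim1}.

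For part (i), I would note that the problem being underdetermined means $m<n+2=\dim S({\bar\triangle})$. Since $\operatorname{rank}L\le m$, rank--nullity gives
$$\dim\ker L=\dim S({\bar\triangle})-\operatorname{rank}L\ge(n+2)-m>0.$$
Hence $\ker L$ contains a nonzero element $s$, and by the definition of $L$ this $s$ satisfies $s\vert_{\mathcal A}=0$ while $s\neq 0$, which is exactly the assertion.

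For part (ii), the key step is the translation of the phrase ``the fundamental function $s_i^\star$ exists'' into a statement about $L$. A fundamental function $s_i^\star$ for the node $A_i$ exists precisely when the $i$-th standard basis vector $e_i\in\mathbb{R}^m$ lies in $\operatorname{im}L$, because the defining conditions $s_i^\star(A_j)=\delta_i^j$ are exactly $L(s_i^\star)=e_i$. If every fundamental function existed, then every $e_i$ would lie in $\operatorname{im}L$, forcing $\operatorname{im}L=\mathbb{R}^m$ and so $\operatorname{rank}L=m$. But the problem being overdetermined means $m>n+2=\dim S({\bar\triangle})\ge\operatorname{rank}L$, a contradiction. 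Consequently at least one $e_i$ fails to lie in $\operatorname{im}L$; that is, there is a node $A_i\in\mathcal A$ for which no fundamental function exists.

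I do not expect any genuine obstacle here, as the statement is, as the authors remark, a basic Linear Algebra fact. The only point that requires a moment's care is the equivalence in part (ii) between the existence of $s_i^\star$ and the membership $e_i\in\operatorname{im}L$; once that identification is made, the impossibility of a linear map being surjective onto a space of dimension strictly larger than that of its domain closes the argument.
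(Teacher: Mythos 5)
Your proof is correct. The paper offers no argument for this proposition, dismissing it as a basic Linear Algebra fact, and your rank--nullity argument via the evaluation map $L$ (with the identification of ``$s_i^\star$ exists'' with $e_i\in\operatorname{im}L$) is precisely the standard justification the authors evidently have in mind.
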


\section{Subproblems}
For a given strip ${\bar\triangle}$ denote by ${{\bar\triangle}}_k^m,\ \le k\le m\le n,$ the following part of it
$${{\bar\triangle}}_k^m:=\left\{\Delta_k,\Delta_{k+1},\ldots, \Delta_m\right\}.$$
For a problem $\left\{{\mathcal A}, {\bar\triangle}\right\},$ denote by $\left\{{\mathcal A}, {\bar\triangle}\right\}_{k}^{m},$
the subproblem with the function space
 $S({\bar\triangle}_k^m),$ and the set of nodes ${\mathcal A}_k^m=\mathcal A \cap {\bar\triangle}_k^m,$ i.e.,
$$\left\{{\mathcal A}, {\bar\triangle}\right\}_{k}^{m}:=\left\{{\mathcal A}_{k}^{m}, {\bar\triangle}_{k}^{m}\right\}.
$$

\subsection{Problems with boundary conditions}

Denote by $S(:\hspace{-.13cm}{{\bar\triangle}})$ the linear space of continuous piecewise linear functions on ${\bar\triangle}$ vanishing at the left (boundary) side of the strip $|{\bar\triangle}|$, i.e.,
$$S(:\hspace{-.13cm}{{\bar\triangle}})=\left\{s\in S({{\bar\triangle}}),\  s\vert_{l(\Delta)}=0\right\}.$$
In the similar way one can define the space of functions vanishing at the right side of the strip $|{\bar\triangle}|,$ i.e.,
$$S({{\bar\triangle}}\hspace{-.135cm}:)=\left\{s\in S({{\bar\triangle}}),\  s\vert_{r(\Delta)}=0\right\}.$$
One may define also the space of functions vanishing at the both left and right sides of the strip $|{\bar\triangle}|,$ i.e.,
$$S(:\hspace{-.13cm}{{\bar\triangle}}\hspace{-.135cm}:)=\left\{s\in S({{\bar\triangle}}),\   s\vert_{l(\Delta)}=s\vert_{r(\Delta)}=0\right\}.$$

By counting the number of vertices of the strip where a function from the space may differ
from $0,$ we get, in the same way as in the proof of \eqref{dim1}, that
$$\dim S(:\hspace{-.13cm}{{\bar\triangle}})=\dim S({{\bar\triangle}}\hspace{-.135cm}:)=n\ \hbox{and}\ \dim S(:\hspace{-.13cm}{{\bar\triangle}}\hspace{-.135cm}:)=n-2.$$

Denote the interpolation problems with the spaces $S(:\hspace{-.13cm}{{\bar\triangle}}), S({{\bar\triangle}}\hspace{-.135cm}:),$  $S(:\hspace{-.13cm}{{\bar\triangle}}\hspace{-.135cm}:),$ and a node set $\mathcal A$ by $\left\{{{\bar\triangle}}\hspace{-.135cm}:, \mathcal A\right\},$ $\left\{:\hspace{-.13cm}{{\bar\triangle}}, \mathcal A\right\},$ $\left\{:\hspace{-.13cm}{{\bar\triangle}}\hspace{-.135cm}:, \mathcal A\right\},$ respectively.

As we will see below the poisedness of interpolation problems with boundary conditions readily can be reduced to
the previous general interpolation problems by just adding two nodes in the left or/and right sides of the strip.

For this purpose it is convenient to use the following notation for the strip ${\bar\triangle}$

$$2+\mathcal A:=\mathcal A\cup \{ V_1,V_2\},$$
where $V_1$ and $V_2$ are the two vertices of $l(\bar\triangle).$

$$\mathcal A+2:=\mathcal A\cup \{ V_{n+1},V_{n+2}\},$$
where $V_{n+1}$ and $V_{n+2}$ are the two vertices of $r(\bar\triangle).$ Denote also

$$2+\mathcal A+2:=\mathcal A\cup \{ V_1,V_2, V_{n+1},V_{n+2}\}.$$
Let us call two interpolation problems \emph{equivalent} if both they are poised or both they are not poised.
By using Proposition \ref{0p1}, ii), we readily get
\begin{proposition}\label{0p3}
The folllwing pairs of interpolation problems are equivalent:\\
\centerline{$\left\{{{\bar\triangle}}\hspace{-.135cm}:, \mathcal A\right\}$ and $\left\{{{\bar\triangle}}, \mathcal A+2\right\},$}\\
\centerline{$\left\{:\hspace{-.13cm}{{\bar\triangle}}, \mathcal A\right\}$ and $\left\{{{\bar\triangle}}, 2+\mathcal A\right\},$}\\
\centerline{$\left\{:\hspace{-.13cm}{{\bar\triangle}}\hspace{-.135cm}:, \mathcal A\right\}$ and $\left\{{{\bar\triangle}},2+ \mathcal A+2\right\}.$}
\end{proposition}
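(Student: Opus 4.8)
The plan is to translate each boundary-vanishing constraint into two ordinary interpolation conditions and then invoke the null-space criterion of Proposition \ref{0p1}, ii). I first note that this criterion, being a general fact of linear algebra, holds equally for the boundary spaces: for a problem whose node count matches the dimension of its function space, poisedness is equivalent to the statement that the only function in the space annihilated by all the nodes is the zero function. Since the dimensions $\dim S(:\hspace{-.13cm}{{\bar\triangle}})=\dim S({{\bar\triangle}}\hspace{-.135cm}:)=n$ and $\dim S(:\hspace{-.13cm}{{\bar\triangle}}\hspace{-.135cm}:)=n-2$ have already been recorded, and since adjoining two (respectively four) vertices raises the node count to exactly $n+2=\dim S({{\bar\triangle}})$, the cardinality hypothesis is met on both sides of each pair.

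The geometric observation at the heart of the proof is the following. If $s\in S({{\bar\triangle}})$ vanishes at the two vertices $V_{n+1}$ and $V_{n+2}$, then $s$ vanishes on the whole right side $r({{\bar\triangle}})=\left[V_{n+1},V_{n+2}\right]$. Indeed, $s\vert_{\Delta_n}\in\Pi_1$ is affine and $\left[V_{n+1},V_{n+2}\right]$ is a side of $\Delta_n$; an affine function vanishing at both endpoints of a segment vanishes identically on it. The symmetric statement holds for the left side $l({{\bar\triangle}})=\left[V_1,V_2\right]$ using the triangle $\Delta_1$.

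With this in hand I would verify the first equivalence by comparing null spaces. For $s\in S({{\bar\triangle}})$ one has $s\vert_{\mathcal A+2}=0$ precisely when $s\vert_{\mathcal A}=0$ and $s(V_{n+1})=s(V_{n+2})=0$, which by the observation is the same as $s\vert_{\mathcal A}=0$ together with $s\vert_{r({{\bar\triangle}})}=0$, that is, $s\in S({{\bar\triangle}}\hspace{-.135cm}:)$ with $s\vert_{\mathcal A}=0$. Hence the set of functions killed by the nodes of $\left\{{{\bar\triangle}},\mathcal A+2\right\}$ coincides exactly with the set killed by the nodes of $\left\{{{\bar\triangle}}\hspace{-.135cm}:,\mathcal A\right\}$; one of these sets is trivial if and only if the other is, so by the criterion the two problems are poised or not poised together. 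The second and third equivalences are proved word for word, using $V_1,V_2$ and the left side for $\left\{:\hspace{-.13cm}{{\bar\triangle}},\mathcal A\right\}$, and both vertex pairs simultaneously for $\left\{:\hspace{-.13cm}{{\bar\triangle}}\hspace{-.135cm}:,\mathcal A\right\}$.

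I do not anticipate a genuine obstacle: once the endpoint-to-side observation is made, each equivalence is an identity of null spaces. The only point deserving a remark is that $\mathcal A$ should contain none of the adjoined boundary vertices, so that the union really enlarges the node set by the stated amount; if it did contain such a vertex then the corresponding boundary problem is automatically not poised, and so is its counterpart, leaving the equivalence intact.
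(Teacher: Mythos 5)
Your proof is correct and follows exactly the route the paper intends: the paper gives no written proof beyond citing Proposition~\ref{0p1}, ii), and your endpoint-to-side observation (an affine function vanishing at both vertices of $l(\bar\triangle)$ or $r(\bar\triangle)$ vanishes on that whole side) is precisely the detail that makes the null spaces coincide. Your closing remark about adjoined vertices already lying in $\mathcal A$ is a sensible extra precaution not addressed in the paper.
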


Note that in the subproblem $\left\{{\bar\Delta,\mathcal A}+2\right\}_{k}^m$ we have that
$$\mathcal A+2:=\mathcal A\cup \{ V_{m+1},V_{m+2}\},$$
where $V_{m+1}$ and $V_{m+2}$ are the two vertices of $r({\bar\triangle}_k^m).$
The situation is similar with the sets $2+\mathcal A$ and $2+\mathcal A +2.$

\section{Reductions of interpolation problems}

Below we bring a main theorem regarding the reduction of an interpolation problem having an exact or overdetermined subproblem.
\begin{theorem} \label{main} Suppose that $\left\{{\bar\triangle},\mathcal A\right\}$ is an exact problem where the strip $\bar\triangle$ consists of $n$ triangles and $\left\{\bar\triangle,\mathcal A\right\}_k^m$ is a subproblem, where $1\le k\le m\le n$.\\
 Then the following hold.\\
i) If the subproblem $\left\{\bar\triangle,\mathcal A\right\}_k^m$ is exact and not poised, or overdetermined, then the problem  $\left\{\bar\triangle,\mathcal A\right\}$ is not poised.\\
ii) If the subproblem $\left\{\bar\triangle,\mathcal A\right\}_k^m$ is poised then the problem $\left\{{{\bar\triangle}},\mathcal A\right\}$ is poised if and only if
the both following two reduced problems
\begin{equation}\label{0r}
\left\{{{\bar\triangle},\mathcal A}+2\right\}_{1}^{k-1},\quad \left\{{{\bar\triangle}},2+{\mathcal A}\right\}_{m+1}^n,
\end{equation}
are exact and poised.
\end{theorem}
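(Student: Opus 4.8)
The plan is to use the poised middle subproblem $\left\{\bar\triangle,\mathcal A\right\}_k^m$ to \emph{cut} the strip along the two interior sides $\left[V_k,V_{k+1}\right]$ and $\left[V_{m+1},V_{m+2}\right]$, and to run everything through Propositions \ref{0p1}--\ref{0p3}. The guiding observation is that a boundary-type function (one vanishing on an interface side) may be extended by $0$ across that side and still lie in $S(\bar\triangle)$, and conversely a global function that vanishes on an interface side restricts to a boundary-type function on each piece. This is exactly what lets the problem decouple once the middle is known to be poised.

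For part (i) I would argue through fundamental functions. If $\left\{\bar\triangle,\mathcal A\right\}_k^m$ is exact and not poised, Proposition \ref{0p1}(i) applied to the subproblem yields a node $A^\star\in\mathcal A_k^m$ that has no fundamental function in $S(\bar\triangle_k^m)$; if instead the subproblem is overdetermined, Proposition \ref{0p2}(ii) produces such a node directly. In either case the point is that a fundamental function $s^\star_{A^\star}\in S(\bar\triangle)$, if it existed, would restrict on $\bar\triangle_k^m$ to a continuous piecewise linear function equal to $1$ at $A^\star$ and to $0$ at every other node of $\mathcal A_k^m$ (since $\mathcal A_k^m\subseteq\mathcal A$), i.e.\ to a fundamental function for $A^\star$ in $S(\bar\triangle_k^m)$ --- which does not exist. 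Hence $s^\star_{A^\star}$ cannot exist in $S(\bar\triangle)$ either, and Proposition \ref{0p1}(i) forces $\left\{\bar\triangle,\mathcal A\right\}$ to be non-poised.

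For the \emph{if} direction of part (ii) I would apply Proposition \ref{0p1}(ii). Let $s\in S(\bar\triangle)$ vanish on $\mathcal A$. Its restriction to $\bar\triangle_k^m$ lies in $S(\bar\triangle_k^m)$ and vanishes on $\mathcal A_k^m$, so poisedness of the subproblem gives $s\equiv 0$ on $|\bar\triangle_k^m|$; in particular $s$ vanishes on both interface sides $\left[V_k,V_{k+1}\right]$ and $\left[V_{m+1},V_{m+2}\right]$. Therefore $s\vert_{\bar\triangle_1^{k-1}}\in S({\bar\triangle}_1^{k-1}\hspace{-.135cm}:)$ vanishes on $\mathcal A_1^{k-1}$ and $s\vert_{\bar\triangle_{m+1}^n}\in S(:\hspace{-.13cm}{\bar\triangle}_{m+1}^n)$ vanishes on $\mathcal A_{m+1}^n$. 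By Proposition \ref{0p3} these two boundary problems are equivalent to the reduced problems in \eqref{0r}, which are assumed poised, so both restrictions vanish and $s\equiv 0$. As $\left\{\bar\triangle,\mathcal A\right\}$ is exact, Proposition \ref{0p1}(ii) gives poisedness. The \emph{only if} direction is the mirror image at the level of poisedness: were, say, the left reduced problem not poised, then via Proposition \ref{0p3} there would be a nonzero $u\in S({\bar\triangle}_1^{k-1}\hspace{-.135cm}:)$ vanishing on $\mathcal A_1^{k-1}$; since $u$ vanishes on $\left[V_k,V_{k+1}\right]$ it extends by $0$ across the middle and right pieces to a nonzero $s\in S(\bar\triangle)$ with $s\vert_{\mathcal A}=0$, contradicting poisedness of $\left\{\bar\triangle,\mathcal A\right\}$.

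The remaining, and I expect principal, obstacle is to certify that the reduced problems are \emph{exact}, since the arguments above only deliver injectivity of the evaluation on the boundary spaces. Here I would combine $\#\mathcal A=n+2$, the exactness of the poised subproblem $\#\mathcal A_k^m=m-k+3$, and the injectivity just obtained, which together with $\dim S({\bar\triangle}_1^{k-1}\hspace{-.135cm}:)=k-1$ and $\dim S(:\hspace{-.13cm}{\bar\triangle}_{m+1}^n)=n-m$ forces $\#\mathcal A_1^{k-1}\ge k-1$ and $\#\mathcal A_{m+1}^n\ge n-m$. Since the sum of these lower bounds equals $n+2-\#\mathcal A_k^m=\#\mathcal A_1^{k-1}+\#\mathcal A_{m+1}^n$, both inequalities must be equalities, and restoring the two interface vertices as in \eqref{0r} recovers the correct dimensions $k+1$ and $n-m+2$. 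The genuinely delicate point in this count is the treatment of nodes that happen to lie on the interior cutting sides $\left[V_k,V_{k+1}\right]$ or $\left[V_{m+1},V_{m+2}\right]$: such nodes are shared by neighbouring sub-strips, carry no information for a boundary space on which they are forced to vanish, and must be accounted for so as not to corrupt the tallies; verifying that they do not disturb the exactness of the reduced problems is where I expect the real work to lie.
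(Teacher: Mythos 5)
Your argument coincides with the paper's in every essential step. Part (i) is the same fundamental-function argument (the paper simply asserts as ``evident'' the restriction reasoning you spell out); the ``if'' direction of (ii) is the same kernel argument through the poised middle piece, only routed explicitly through Proposition \ref{0p3}; the ``only if'' direction rests on the same extension-by-zero across the interface sides $[V_k,V_{k+1}]$ and $[V_{m+1},V_{m+2}]$; and your exactness count is a dual reading of the paper's: you derive $\#\mathcal A_1^{k-1}\ge k-1$ and $\#\mathcal A_{m+1}^n\ge n-m$ from injectivity of the evaluation maps, whereas the paper observes that the total number of nodes available equals the total needed, so that a failure of exactness would make one reduced problem underdetermined, and then invokes Proposition \ref{0p2}(i) together with the same zero-extension to reach a contradiction. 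The one point you flag and leave open --- nodes of $\mathcal A$ lying on the cutting sides, which are counted in both $\mathcal A_k^m$ and an outer piece and would spoil the identity $\#\mathcal A_1^{k-1}+\#\mathcal A_{m+1}^n=n+2-\#\mathcal A_k^m$ --- is a genuine subtlety, but the paper's proof makes exactly the same tacit disjointness assumption in its count $n+2-(m-k+3)+4$, so you have not omitted anything the paper actually supplies; you have only been more candid about a hypothesis both arguments quietly use.
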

Note that in the cases $k=1$ and $m=n$ we have just one reduced problem instead of two.
\begin{proof}
Suppose first that the subproblem $\left\{{{\bar\triangle}},\mathcal A\right\}_k^m$ is exact and not poised, or overdetermined. Then, in view of Propositions \ref{0p1}, i), and \ref{0p2}, ii), there is a node $A\in{\mathcal A}_k^m$ which does not have a fundamental function. Then, evidently the same node $A$ does not  have a fundamental function for the whole node set $\mathcal A,$ too. Hence, in view of Proposition \ref{0p1}, i), we get that the problem $\left\{{{\bar\triangle}},\mathcal A\right\}$ is not poised.

Now consider the case when the subproblem $\left\{{{\bar\triangle}},\mathcal A\right\}_k^m$ is poised.

Let us assume that the both reduced problems in \eqref{0r} are poised. Then let us prove that the problem $\left\{{{\bar\triangle}},\mathcal A\right\}$ is poised, too. Notice first that the problem $\left\{{{\bar\triangle}},\mathcal A\right\}$ is exact.
Thus, by following Proposition \ref{0p1}, ii), assume that $s\in S({\bar\triangle}),\ s\vert_{\mathcal A}=0.$
Now, since the subproblem $\left\{{{\bar\triangle}},\mathcal A\right\}_k^m$ is poised, we conclude that $s$ vanishes on the triangles $\Delta_i,\ i=k,k+1,\ldots,m.$
Therefore $s$ vanishes at the right side of the triangle $\Delta_{k-1}$ and at the left side of the triangle $\Delta_{m+1}.$
Thus we have that $s\vert_{\left\{{\mathcal A}_{1}^{k-1}+2\right\}}=0$ and $s\vert_{\left\{2+{\mathcal A}_{m+1}^n\right\}}=0.$
Finally, since the problems in \eqref{0r} are poised, we obtain that $s$ vanishes on the triangles $\Delta_i,\ i=1,\ldots,k-1$ and
$\Delta_i,\ i=m+1,m+2,\ldots,n.$ Hence $s$ vanishes on all triangles of ${\bar\triangle}.$

Next let us assume that the problem $\left\{{{\bar\triangle}},\mathcal A\right\}$ is poised and prove that the both reduced problems in \eqref{0r} are poised, too.
Let us show first that these reduced problems are exact.
There are $k-1$ and $n-m$ triangles in the reduced problems \eqref{0r}, respectively.
Thus for exactness we need $k+1$ and $n-m+2$ nodes, respectively, altogether $n+k-m+3$ nodes.
It is easily seen that indeed, in two mentioned problems together we have that many nodes. Indeed, this is the number of the nodes in $\mathcal A$ minus the number of the nodes in ${\mathcal A}_k^m$
and plus $4,$ i.e., $n+2-(m-k+3)+4=n+k-m+3.$ Latter $4\ (=2+2)$ nodes come from the added nodes in the boundary in the node sets $\mathcal A+2$ and $2+\mathcal A.$

Now, assume by way of contradiction that
 a subproblem in \eqref{0r} is not exact. Therefore one of the subproblems, say the first problem in  \eqref{0r}, is underdetermined, and another is overdetermined.
Then, in view of Proposition \ref{0p2}, i), there is a function $s\in S({\bar\triangle}_{1}^{k-1})$ such that
\begin{equation}\label{200}
 s\vert_{\left\{{\mathcal A}_{1}^{k-1}+2\right\}}=0, \ \hbox{and}\ s\neq 0.
\end{equation}
Thus $s$ here vanishes on ${\mathcal A}_{1}^{k-1}$ and, since of "$+2,$" also on the right side of the triangle $\Delta_{k-1}.$
Now let us extend this function $s$ from ${\bar\triangle}_{1}^{k-1}$ till the whole strip ${|\bar\triangle}|$ by defining it to be $0$ on the triangles $\Delta_i,\ i=k, k+1,\ldots,n.$ Denote by $\tilde s$ the extended function.
Then it is easily seen that $\tilde s\in S({\bar\triangle}),\ \tilde s\vert_{\mathcal A}=0$ and $\tilde s \neq 0.$
This, in view of Proposition \ref{0p1}, ii), means that the problem $\left\{{{\bar\triangle}},\mathcal A\right\}$ is not poised, which contradicts our assumption.

Finally, let us show that the both reduced problems in \eqref{0r} are poised.
Assume by way of contradiction that one of them, say the first, is not poised.
Since it is exact, we can use Proposition \ref{0p1}, ii), to get that there is a function $s\in S({\bar\triangle}_{1}^{k-1})$ such that
the relation \eqref{200} is satisfied. From here we continue in the same way as in the above step, after the relation \eqref{200}.
\end{proof}

\section{The basic interpolation problems}

Let us denote by "$3$" the problem  with a strip consisting of a triangle and at least three nodes inside.
The respective \emph{basic} problem, denoted briefly by "$3/B$", is a "$3$" problem with exactly three nodes, which are non-collinear.

Then, let us denote by "$2+2$" the problem  with a strip consisting of two triangles and exactly two nodes in each of them.
Note that in this case no node can lie in the interior side of the strip.
We call a problem "$2+2$" basic and denote it by "$2+2/B$" if in each triangle the line passing through the two nodes there does not intersect the other triangle.
Note that
in the case of "$2+2/B$" problem no node can coincide with a vertex of the strip.

Next consider an interpolation problem denoted by "$2+1+\cdots+1+2,m,$" where $m$ is the number of $1'$s. This is the interpolation problem with a strip  consisting of $m+2$ triangles such that each of the first and the last triangles contains exactly two nodes and each of the other $m$ triangles contains exactly a node. Note that in this case no node can be located in an interior side of the strip.
We call a problem "$2+1+\cdots+1+2,m$" basic and denote it by "$2+1+\cdots+1+2,m/B$" if the line passing through the two nodes in each of the first and the last triangles does not intersect the neighboring triangle.

Note that the "$2+2$" problem can be considered as a special case of the "$2+1+\cdots+1+2,m$" problem, where $m=0.$

\subsection{The poisedness of the basic problems}

Let us start with a simple lemma.
Suppose that two nodes $A,B$ of a node set $\mathcal A$ belong to a triangle of ${\bar\triangle}.$ Denote by $A',B'$ the points of intersection of the line passing through $A$ and $B$ with the sides of the triangle. Let us call $\{A',B'\}$ the intersection pair of $\{A,B\}.$  Denote by ${\mathcal A}'$ the set of the nodes received from $\mathcal A$ by replacing the pair $\{A,B\}$ with $\{A',B'\}$ there.
We call the set $\mathcal A'$ the \emph{line transformation} of the set $\mathcal A$ with respect to the pair $\{A,B\}.$

Two node sets $\mathcal A$ and $\mathcal B$ are called \emph{line-equivalent} if one of them can be obtained from the other by means of several line transformations.

\begin{lemma}\label{2l}
 The interpolation problems $\left\{{\bar\triangle}, \mathcal A\right\}$ and  $\left\{{\bar\triangle}, {\mathcal B}\right\}$ are equivalent, if the node sets $\mathcal A$ and $\mathcal B$ are line-equivalent.
\end{lemma}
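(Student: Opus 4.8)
The plan is to reduce the statement to the case of a \emph{single} line transformation and then to exploit the fact that every function in $S({\bar\triangle})$ is affine (i.e.\ lies in $\Pi_1$) on each individual triangle. Since line-equivalence is defined as the outcome of finitely many line transformations, and since the relation ``equivalent'' simply records whether the two problems share the same poisedness status, this relation is transitive. Hence it suffices to treat the case $\mathcal B=\mathcal A'$, where $\mathcal A'$ is the line transformation of $\mathcal A$ with respect to a pair $\{A,B\}$ contained in a triangle $\Delta_i$, and then chain the equalities of poisedness status along the sequence $\mathcal A\to\mathcal A'\to\cdots\to\mathcal B$.

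First I would record that a single line transformation preserves the number of nodes. Because $A\neq B$ lie in the closed convex triangle $\Delta_i$, the line $\ell$ through them crosses the boundary $\partial\Delta_i$ in exactly two distinct points, so $A'\neq B'$ and, in the generic situation, $\#\mathcal A'=\#\mathcal A$. In particular $\left\{{\bar\triangle},\mathcal A'\right\}$ is exact precisely when $\left\{{\bar\triangle},\mathcal A\right\}$ is; and when neither is exact, the necessary condition \eqref{A=n+2} shows that neither can be poised, so the two problems are trivially equivalent. Thus I may assume both problems are exact.

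The core step is the following observation. For every $s\in S({\bar\triangle})$ the restriction $s\vert_{\Delta_i}$ is affine, and an affine function vanishing at the two distinct points $A,B$ must vanish on the entire line $\ell$, hence in particular at $A'$ and $B'$; conversely, vanishing at $A',B'$ forces vanishing on $\ell$ and therefore at $A,B$. Consequently $s(A)=s(B)=0$ if and only if $s(A')=s(B')=0$. Since $\mathcal A$ and $\mathcal A'$ coincide away from the replaced pair, the two homogeneous interpolation conditions cut out the \emph{same} subspace, that is, $\left\{s\in S({\bar\triangle}):s\vert_{\mathcal A}=0\right\}=\left\{s\in S({\bar\triangle}):s\vert_{\mathcal A'}=0\right\}$. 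Now I invoke Proposition \ref{0p1}, ii): for the exact problem $\left\{{\bar\triangle},\mathcal A\right\}$ poisedness is equivalent to ``$s\vert_{\mathcal A}=0\Rightarrow s=0$'', and likewise for $\mathcal A'$; by the equality of subspaces just established these two conditions are literally the same, so the two problems are poised or not poised simultaneously. Together with the cardinality remark this settles the single-transformation case, and transitivity then delivers the general line-equivalent case.

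The only genuinely delicate point, and the place I expect to spend care, is the boundary behaviour of the transformation: guaranteeing that $\ell$ meets $\partial\Delta_i$ in two distinct points $A'\neq B'$, and that $A',B'$ do not accidentally coincide with other nodes of $\mathcal A$ (which would lower the cardinality and break exactness). I would dispose of the degenerate configuration in which $A$ and $B$ lie on a single side of $\Delta_i$—so that $\ell$ is that side and $A',B'$ are its endpoints—by the very same affine-vanishing argument, since an affine function vanishing at two points of a side vanishes along the whole side and hence at its vertices; the equality of the two vanishing subspaces therefore persists in this case as well.
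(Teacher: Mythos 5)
Your proof is correct and follows essentially the same route as the paper's: reduce to a single line transformation and observe that, since $s\vert_{\Delta_i}$ is affine, the conditions $s(A)=s(B)=0$ and $s(A')=s(B')=0$ both say that $s$ vanishes on the line through $A$ and $B$, so the two homogeneous vanishing subspaces coincide and Proposition \ref{0p1}, ii) finishes the argument. The extra care you take with cardinality and degenerate boundary configurations is reasonable but not a different method; the paper simply leaves those routine checks implicit.
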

\begin{proof}
In view of Proposition \ref{0p1}, ii), it suffices to verify that
\begin{equation}\label{equi}
s\in S({\bar\triangle}),\ s\vert_\mathcal A =0 \Leftrightarrow s\in S({\bar\triangle}),\ s\vert_{{\mathcal A}'} =0,
\end{equation}
where the node set $\mathcal A'$ is the line transformation of the set $\mathcal A$ with respect to an appropriate pair of nodes $\{A,B\}\subset \mathcal A.$
Now notice that \eqref{equi} readily follows from the evident fact that $$s(A)=s(B)=0 \Leftrightarrow s(A')=s(B')=0,$$ where $\{A',B'\}$ is the intersection pair of $\{A,B\}.$ Indeed, each side of this equivalence means merely that $s$ vanishes on the line passing through $A$ and $B.$
\end{proof}

The proof of the following proposition contains an easy algorithm for verifying the poisedness of basic problems.
\begin{proposition} \label{bp} The problems "$3/B$" and "$2+1+2/B$" are always poised.\\
The problem "$2+1+\cdots+1+2,m/B,$" where $m\neq 1,$ may be poised or not, depending on the
configuration of nodes.
Moreover, if the problem is not poised then it becomes poised after changing the location of one of the two nodes in the first or the last triangle, such that the slope of the line passing through the two nodes is changed.
\end{proposition}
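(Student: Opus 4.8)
The plan is to apply Proposition~\ref{0p1}~ii) throughout and to translate each basic problem into the vanishing of a homogeneous linear system in the vertex values $x_j:=s(V_j)$, recalling that an element of $S(\bar\triangle)$ is uniquely and freely determined by $x_1,\ldots,x_{n+2}$. For the basic problem $3/B$ this is immediate: $s$ is then a single linear function vanishing at three non-collinear nodes, so $s\equiv 0$. For the remaining problems I would first read off the two end triangles. On $\Delta_1$ the restriction $s|_{\Delta_1}$ is linear and vanishes at the two nodes, hence is a scalar multiple of a fixed linear form $d_1$ vanishing on the line $\ell_1$ through them (by Lemma~\ref{2l} only $\ell_1$ matters). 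Since the problem is basic, $\ell_1$ misses $\Delta_2$ and therefore neither crosses the shared interior side $[V_2,V_3]$ nor passes through $V_2$ or $V_3$; thus $d_1(V_2),d_1(V_3)$ are nonzero of equal sign, and we obtain a relation $x_2=\alpha x_3$ with $\alpha>0$ (where $x_3=0$ forces $s|_{\Delta_1}=0$). Symmetrically the last triangle gives $x_{m+3}=\beta x_{m+2}$ with $\beta>0$.

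Each middle triangle $\Delta_{1+i}$ contributes one equation $\lambda^{(i)}_{1+i}x_{1+i}+\lambda^{(i)}_{2+i}x_{2+i}+\lambda^{(i)}_{3+i}x_{3+i}=0$, the $\lambda$'s being the barycentric coordinates of its node; because no node lies on an interior side, the coordinates at the two outer vertices $V_{1+i},V_{3+i}$ are strictly positive. Substituting the two end relations collapses everything to a homogeneous $m\times m$ tridiagonal system $My=0$ with $y=(x_3,\ldots,x_{m+2})$, whose off-diagonal entries are these positive coordinates and whose first and last diagonal entries carry $\alpha$ and $\beta$. One checks that $y=0$ forces $s\equiv0$, so the problem is poised if and only if $\det M\neq0$; computing $\det M$ by the usual three-term recurrence is the announced algorithm. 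For $m=1$ the matrix is the single entry $\lambda_2\alpha+\lambda_3+\lambda_4\beta$, which is strictly positive because $\alpha,\beta>0$ and the coordinates are nonnegative and not all zero; hence $2+1+2/B$ is always poised. (The degenerate case $m=0$, i.e.\ $2+2/B$, reduces directly to the transparent condition $\alpha\beta\neq1$.)

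For $m\ge2$ a tridiagonal matrix with positive off-diagonals and such diagonals may be singular or not, and I would record explicit node configurations of each kind, giving the ``may be poised or not'' assertion. The delicate point, and the step I expect to be the main obstacle, is the final claim that a non-poised problem is always repaired by relocating a single end node. Here $\alpha$ occurs only in the $(1,1)$ entry of $M$ and $\beta$ only in the $(m,m)$ entry, so $\det M$ is affine in each, with $\partial_\alpha\det M=\lambda^{(1)}_2\,\det P$ and $\partial_\beta\det M=\lambda^{(m)}_{m+3}\,\det Q$, where $P,Q$ are the lower-right and upper-left $(m-1)\times(m-1)$ blocks and the prefactors are positive; I must guarantee that at least one of these slopes is nonzero. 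I would settle this with the Desnanot--Jacobi (Dodgson) identity $\det M\cdot\det R=\det P\cdot\det Q-\det S\cdot\det T$, where $R$ is the central block and $S,T$ the two corner minors. For a tridiagonal matrix $S$ and $T$ are triangular, so $\det S\cdot\det T$ equals the product of all off-diagonal entries of $M$, which is strictly positive; hence $\det M=0$ forces $\det P\cdot\det Q>0$, so both $\det P$ and $\det Q$ are nonzero. Therefore $\det M$ is a genuinely non-constant affine function of $\alpha$ (and of $\beta$): moving a node in the first (or last) triangle so as to change the slope of $\ell_1$ strictly changes $\alpha$, and any such change makes $\det M\neq0$, i.e.\ renders the problem poised, while a sufficiently small move keeps the configuration basic. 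This completes the scheme.
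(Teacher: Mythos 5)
Your argument is correct in substance but follows a genuinely different route from the paper's. The paper proves ``$2+1+2/B$'' by first applying Lemma~\ref{2l} to slide each node pair onto the boundary sides of its triangle and then running a sign argument at the vertices ($s(V_1)>0\Rightarrow s(V_2),s(V_3)<0\Rightarrow s(V_4)<0,\ s(V_5)>0$, so $s<0$ on all of $\Delta_2$, contradicting $s(B)=0$); for general $m$ it uses a left-to-right propagation argument showing that $s$ is determined up to a scalar by all the data except the last node $C_2$, so that poisedness reduces to whether $C_2$ lies on an explicit line $\ell$ through $C_1$, and the ``moreover'' part is read off by moving $C_2$ off $\ell$. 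You instead encode the whole problem as a homogeneous tridiagonal system $My=0$ in the vertex values, with strict positivity of the off-diagonal entries coming from basicness and the no-interior-side condition; this recovers the $m=1$ case from positivity of the single matrix entry, the $m=0$ case from $\alpha\beta\neq 1$, and -- the genuinely new ingredient -- the ``moreover'' part from the Desnanot--Jacobi identity, which shows that $\det M=0$ forces both corner cofactors to be nonzero, so $\det M$ is non-constant affine in $\alpha$ and in $\beta$. Your approach buys an explicit determinant (hence a concrete numerical test and a symmetric treatment of the two ends), while the paper's propagation argument is coordinate-free and yields the repair statement for the last node essentially for free. Two soft spots you should shore up: (1) the assertion that relocating one node so as to change the slope of $\ell_1$ necessarily changes $\alpha$ needs a sentence -- it is true because the new line still passes through the unmoved node $A_1\notin\,$line$(V_2V_3)$, and among lines through a fixed such point the ratio $d(V_2)/d(V_3)$ determines the line, whereas for lines not constrained to pass through $A_1$ a whole pencil through an external point of line$(V_2V_3)$ shares the same $\alpha$; (2) the claim that non-poised basic configurations actually occur for every $m\ge 2$ is left as a promissory note (though the paper is equally informal on this point).
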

Note that the case $m=0$ here concerns the problem  "$2+2/B.$"

\begin{proof} The case of "$3/B$" is obvious.

Consider the problem "$2+1+2/B$" (see Fig. \ref{pic2}).
\begin{figure}[ht] 
\centering
\includegraphics[scale=0.5]{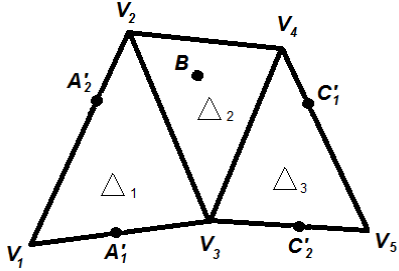}
\caption{The case of "$2+1+2/B$" }\label{pic2}
\end{figure}
Suppose that ${\bar\triangle}=\{\Delta_1,\Delta_2, \Delta_3\}$ and $\mathcal A=\{A_1,A_2,B,C_1,C_2\}$, where $A_1,A_2\in \Delta_1, B\in \Delta_2,$ $C_1,C_2\in \Delta_3.$ In view of Lemma \ref{2l} we may replace the node set $\mathcal A$ with the set $\mathcal A'=\{A_1',A_2',B,C_1',C_2'\},$ where $\{A_1',A_2'\}$ and $\{C_1',C_2'\}$ are the intersection pairs of the respective pairs from $\mathcal A.$ Since the problem $\left\{{{\bar\triangle}},\mathcal A\right\}$ is basic we have that the nodes $A_1',A_2',C_1',C_2'$ belong to the four sides of the triangles $\Delta_1$ and $\Delta_3,$ which are boundary sides of the strip ${\bar\triangle},$ one node in each side (see Fig. 2).
Now suppose by way of contradiction that the problem $\left\{{\bar\triangle}, \mathcal A'\right\}$ is not poised. This in view of Proposition \ref{0p1}, ii), means that there is a function $s\in S({\bar\triangle})$ such that $s\vert_{\mathcal A'}=0$ while $s\neq 0.$
Then it is easily seen that $s\vert_{\Delta_1}\neq 0$ and therefore $s(V_1)\neq 0.$
Assume without loss of generality that $s(V_1)> 0.$ Then, since $s$ is linear and $s(A_1')=s(A_2')=0$ we get readily that
$s(V_2)<0$ and $s(V_3)<0.$ Now, in view of the latter condition and $s(C_1')=s(C_2')=0,$ we get readily that
$s(V_5)>0$ and $s(V_4)<0.$ Thus $s$ assumes negative values at all the vertices of the triangle $\Delta_2$ and it vanishes at $B\in\Delta_2,$
which is a contradiction.

Finally, consider the problem "$2+1+\cdots+1+2,m/B,$" where $m\neq 1.$
Suppose that ${\bar\triangle}=\{\Delta_1,\ldots, \Delta_{m+2}\}$ and $\mathcal A=\{A_1,A_2,B_1,\ldots, B_m,C_1,C_2\}$, where $A_1,A_2\in \Delta_1, B_i\in \Delta_{i+1}, C_1,C_2\in \Delta_{m+2}.$

Assume that the problem $\left\{{\bar\triangle}, \mathcal A\right\}$ is not poised. This in view of Proposition \ref{0p1}, ii), means that there is a function $s$ such that
\begin{equation}\label{contr}
s\in S({\bar\triangle}),\ s\vert_\mathcal A=0 \ \hbox{and}\ s\neq 0.
\end{equation}
It is easily seen that $s\vert_{\Delta_1}\neq 0.$ Indeed, $s\vert_{\Delta_1} = 0$ implies that $s$ vanishes at the left side of $\Delta_2.$ Then, in view of the condition $s(B_1)=0,$ where $B_1\in\Delta_2$ we conclude that $s\vert_{\Delta_2} = 0.$ Notice that $B_1$ does not belong to the left (or right) side of the triangle, since the problem is basic. Continuing this way we
obtain $s\vert_{\Delta_i} = 0,\ i=2,\ldots, m+2$  and thus $s=0,$ contradicting our assumption.
In the same way we get that
\begin{equation}\label{m+2}
s\vert_{\Delta_{m+2}}\neq 0.
\end{equation}
By replacing $s$, if necessary, with a nonzero constant multiple of it, assume that $s(V_1)=1.$ Now, in view of the condition $s(A_1)=s(A_2)=0,$ where $A_i\in\Delta_1,$ the function $s\in S({\bar\triangle})$ is determined on $\Delta_1.$ Indeed, the nodes $V_1,A_1,A_2$ are not collinear since the problem is basic. Hence $s$ is determined at the left side of the second triangle $\Delta_2.$ Next, we have that $s(B_1)=0,$ where $B_1\in\Delta_2$ does not belong to the left (or right) side of the triangle. Thus we get that $s$ is determined on $\Delta_2.$ Continuing this way we
get that $s$ is determined on $\Delta_{m+1}.$ Now $s$ is determined at the left side of the last triangle $\Delta_{m+2}.$ By using the condition $s(C_1)=0,$ where $C_1\in\Delta_{m+2},$ we conclude, in view of \eqref{m+2}, that the zero set of $s$ in $\Delta_{m+2}$ is a line $\ell$ passing through $C_1.$ Thus if the last node $C_2$ lies in this line then we have that $s$ vanishes at $C_2$ too. Hence the condition \eqref{contr} holds, which, in view of Proposition \ref{0p1}, ii), means that the problem is not poised. While the condition $C_2\notin \ell$ implies that $s$ vanishes on $\Delta_{m+2}$ which contradicts \eqref{m+2} and whence \eqref{contr}. Thus the problem is poised in this case.
This consideration makes clear also the "moreover" part of Proposition.\end{proof}

It is worth mentioning that if the restriction of $s$ on the left side of triangle $\Delta_{m+2}$ has a zero (as it will happen in the case of "$2+1+2/B$") denoted by $C$ then the basic problem
is poised, wherever the two last nodes $C_1$ and $C_2$ are situated. Indeed, otherwise we would have that the zero-line $\ell$ of $s$ in the last triangle
passes through $C_1, C_2,$ and $C,$ which contradicts the fact that the problem is basic.

\section{Reductions by basic subproblems}
\subsection{Reduction by "$3/B$"}
In the case of basic subproblem "$3/B$" we have a poised subproblem with one triangle. Thus we get from Theorem \ref{main}, ii), the following
\begin{corollary} \label{3B} Assume that $\left\{{{\bar\triangle}},\mathcal A\right\}$ is an exact problem, where the strip consists of $n$ triangles. Assume also that a triangle $\Delta_k,\ 1\le k\le n,$ contains exactly $3$ non-collinear nodes.
Then the interpolation problem $\left\{{{\bar\triangle}},{\mathcal A}\right\},$ is poised if and only if
the both following two reduced problems
\begin{equation}\label{1r}
\left\{{{\bar\triangle},\mathcal A}+2\right\}_{1}^{k-1},\quad \left\{{{\bar\triangle}},2+{\mathcal A}\right\}_{k+1}^n,
\end{equation}
are exact and poised.
\end{corollary}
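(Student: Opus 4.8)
The plan is to read the hypothesis as an instance of the poised-subproblem case of Theorem \ref{main} and then quote that theorem directly. First I would identify the relevant subproblem: taking $m=k$ in the notation of Theorem \ref{main}, the subproblem $\left\{\bar\triangle,\mathcal A\right\}_k^k$ sits on the one-triangle strip $\bar\triangle_k^k=\{\Delta_k\}$ and carries the node set $\mathcal A_k^k=\mathcal A\cap\Delta_k$. By hypothesis this set consists of exactly three non-collinear nodes, so $\left\{\bar\triangle,\mathcal A\right\}_k^k$ is, by definition, precisely the basic problem "$3/B$".

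Next I would invoke the first assertion of Proposition \ref{bp}, according to which the problem "$3/B$" is always poised. Consequently the subproblem $\left\{\bar\triangle,\mathcal A\right\}_k^k$ is poised, which places us exactly in the hypothesis of part ii) of Theorem \ref{main}.

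Applying Theorem \ref{main}, ii), with $m=k$ then finishes the argument: that part states that $\left\{\bar\triangle,\mathcal A\right\}$ is poised if and only if the two reduced problems $\left\{\bar\triangle,\mathcal A+2\right\}_1^{k-1}$ and $\left\{\bar\triangle,2+\mathcal A\right\}_{m+1}^n$ are exact and poised. Substituting $m=k$ turns the second index range into $k+1,\ldots,n$, so these become exactly the pair displayed in \eqref{1r}. In the extreme cases $k=1$ or $k=n$ one of the two reduced problems is absent, precisely as noted after Theorem \ref{main}.

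The hard part will be essentially nonexistent here: the only genuine verification is that the single-triangle subproblem really is a "$3/B$" problem, that is, that $\mathcal A\cap\Delta_k$ equals the three prescribed non-collinear nodes with no further node of $\mathcal A$ lying in $\Delta_k$. This is immediate from the wording "contains exactly $3$ non-collinear nodes," and once it is granted the corollary is a verbatim specialization of Theorem \ref{main}, ii), to the case of a single-triangle poised subproblem.
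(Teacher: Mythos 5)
Your proposal is correct and matches the paper's own derivation: the paper likewise observes that the single-triangle subproblem with three non-collinear nodes is the always-poised basic problem "$3/B$" and then cites Theorem \ref{main}, ii), with $m=k$ to obtain the two reduced problems in \eqref{1r}. Nothing is missing.
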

Note that in the cases $k=1$ and $k=n$ we have just one reduced problem instead of two.

\subsection{Reduction by  "$2+1+\cdots+1+2/B$"}
For this case we get from Theorem \ref{main} the following
\begin{corollary} \label{2112B} Assume that $\left\{{{\bar\triangle}},\mathcal A\right\}$ is an exact problem, where the strip consists of $n$ triangles. Assume also that the subproblem $\left\{{\bar\triangle},\mathcal A\right\}_k^{k+m+2},\ 1\le k\le n-m-2,$ is basic of type "$2+1+\cdots+1+2,m/B$," where  $m\ge 0, m\neq 1.$
Then the problem is not poised if the subproblem is not poised.
In the case when the subproblem is poised the problem $\left\{{{\bar\triangle}},\mathcal A\right\}$ is poised if and only if
the both following two reduced problems
\begin{equation}\label{2r}
\left\{{{\bar\triangle},\mathcal A}+2\right\}_{1}^{k-1},\quad \left\{{{\bar\triangle}},2+{\mathcal A}\right\}_{k+m+3}^n,
\end{equation}
are exact and poised.
\end{corollary}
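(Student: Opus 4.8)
The plan is to obtain this corollary as a direct instantiation of Theorem \ref{main}, in complete analogy with the "$3/B$" reduction of Corollary \ref{3B}. The one new feature, relative to the "$3/B$" case, is that a basic "$2+1+\cdots+1+2,m/B$" subproblem with $m\neq 1$ need \emph{not} be poised; by Proposition \ref{bp} both alternatives genuinely occur, so both branches, i) and ii), of Theorem \ref{main} will be invoked here.

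First I would record the enabling fact that the basic subproblem is \emph{exact}. Indeed, a "$2+1+\cdots+1+2,m$" problem carries $m+4$ interpolation nodes --- two in each of the first and last triangles and one in each intermediate triangle --- and this number equals the dimension of the piecewise-linear space on the underlying strip, by the vertex count used to derive \eqref{dim1}; hence the exactness condition \eqref{A=n+2} is met by the subproblem. This is precisely the hypothesis that lets Theorem \ref{main} apply. I would then split according to Proposition \ref{bp}. If the basic subproblem is not poised, then it is an exact, non-poised subproblem, and Theorem \ref{main}, i), yields immediately that $\left\{{\bar\triangle},\mathcal A\right\}$ is not poised, which is the first assertion of the corollary. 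If instead the basic subproblem is poised, Theorem \ref{main}, ii), applies verbatim and gives that $\left\{{\bar\triangle},\mathcal A\right\}$ is poised if and only if the two reduced problems it produces are exact and poised.

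It remains to identify those two reduced problems with the pair displayed in \eqref{2r}, which is pure index bookkeeping. Reading the upper index of the subproblem as $k+m+2$ in the notation of Theorem \ref{main}, the theorem's reduced problems $\left\{{\bar\triangle},\mathcal A+2\right\}_{1}^{k-1}$ and $\left\{{\bar\triangle},2+\mathcal A\right\}_{(k+m+2)+1}^{n}$ are exactly those in \eqref{2r}; here, under the "$+2$" convention fixed after Proposition \ref{0p3}, the two added boundary nodes attach to the right side of $\Delta_{k-1}$ and to the left side of the triangle immediately following the central block. As in Corollary \ref{3B}, the extreme cases $k=1$ and $k=n-m-2$ render one of the two reduced problems vacuous, leaving a single reduction. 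I expect no genuine obstacle: the entire mathematical content is carried by Theorem \ref{main} together with Proposition \ref{bp}, and the only point demanding care is keeping the strip indices and boundary-node attachments consistent with the conventions established earlier in the paper.
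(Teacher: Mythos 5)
Your proposal is correct and is exactly the paper's intended derivation: the paper states this corollary as an immediate consequence of Theorem \ref{main}, and your two additions --- verifying that a ``$2+1+\cdots+1+2,m$'' subproblem is exact (it has $m+4$ nodes on a strip of $m+2$ triangles, matching the dimension count from \eqref{dim1}) so that part i) applies when it is not poised, and the index bookkeeping identifying the reduced problems with \eqref{2r} --- are precisely the checks the paper leaves implicit. No issues.
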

Note that in the cases $k=1$ and $k=n-m-2$ we have just one reduced problem instead of two.

Since, by Proposition \ref{bp}, the basic problems of type "$2+1+2/B$" are always poised we get from Theorem \ref{main}, ii), the following
\begin{corollary}\label{212B} Assume that $\left\{{{\bar\triangle}},\mathcal A\right\}$ is an exact problem, where the strip consists of $n$ triangles.
Assume also that the subproblem $\left\{{\bar\triangle},\mathcal A\right\}_k^{k+2},\ 1\le k\le n-2,$ is basic of type "$2+1+2/B$".
Then the problem $\left\{{{\bar\triangle}},\mathcal A\right\}$ is poised if and only if
the both following two reduced problems $$\left\{{{\bar\triangle},\mathcal A}+2\right\}_{1}^{k-1},\quad \left\{{{\bar\triangle}},2+{\mathcal A}\right\}_{k+3}^n,$$ are exact and poised.
\end{corollary}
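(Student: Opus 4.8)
The plan is to recognize this corollary as an immediate specialization of Theorem \ref{main}, ii), once the poisedness of the designated subproblem has been secured. First I would observe that, by hypothesis, the subproblem $\left\{{\bar\triangle},\mathcal A\right\}_k^{k+2}$ is a basic problem of type "$2+1+2/B$": it is supported on the sub-strip $\bar\triangle_k^{k+2}$ consisting of exactly the three triangles $\Delta_k,\Delta_{k+1},\Delta_{k+2}$, it carries the prescribed node distribution (two nodes in $\Delta_k$, one in $\Delta_{k+1}$, two in $\Delta_{k+2}$), and it satisfies the basicness condition that the line through the two nodes in each outer triangle does not meet the neighboring middle triangle.

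The key step is then to invoke Proposition \ref{bp}, which states that every "$2+1+2/B$" problem is poised. This is exactly what makes the "$2+1+2$" case special: unlike the general "$2+1+\cdots+1+2,m/B$" situation with $m\neq 1$, here poisedness of the subproblem holds unconditionally, with no dependence on the particular node configuration. Hence the hypothesis of Theorem \ref{main}, ii), namely that $\left\{{\bar\triangle},\mathcal A\right\}_k^{k+2}$ is poised, is automatically fulfilled, and there is no need for a separate "not poised" branch as in Corollary \ref{2112B}.

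With the subproblem known to be poised, I would apply Theorem \ref{main}, ii), directly with lower index $k$ and upper index $m=k+2$. The theorem produces the two reduced problems $\left\{{\bar\triangle},\mathcal A+2\right\}_1^{k-1}$ and $\left\{{\bar\triangle},2+\mathcal A\right\}_{m+1}^n$; substituting $m+1=k+3$ turns the second into $\left\{{\bar\triangle},2+\mathcal A\right\}_{k+3}^n$, which is exactly the pair displayed in the corollary. Theorem \ref{main}, ii), then delivers the asserted equivalence: $\left\{{\bar\triangle},\mathcal A\right\}$ is poised if and only if both reduced problems are exact and poised. The boundary cases $k=1$ and $k=n-2$ collapse one of the two reduced problems to the empty sub-strip and are covered by the note following Theorem \ref{main}.

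Since the entire argument is a substitution into an already-established theorem, there is essentially no analytic obstacle to overcome. The only matter requiring any care is the index bookkeeping — verifying that the choice $m=k+2$ correctly shifts $m+1$ to $k+3$ and that the node counts in the two reduced problems remain consistent with exactness — but this is purely formal and has already been dealt with inside the proof of Theorem \ref{main}.
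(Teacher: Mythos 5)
Your argument is exactly the paper's: the corollary is stated as an immediate consequence of Proposition \ref{bp} (which guarantees that every ``$2+1+2/B$'' problem is poised) combined with Theorem \ref{main}, ii), applied with $m=k+2$. The proposal is correct and takes essentially the same approach as the paper.
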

Note that in the cases $k=1$ and $k=n-2$ we have just one reduced problem instead of two.

\section{Existence of a reduction}

In this section we prove that every exact problem has a subproblem of type "$3$", "2+2", or "$2+1+\cdots+1+2,m$". Next, we show that by applying line-transformations to this subproblem, we can reduce it to a basic subproblem or determine that the given exact problem is not poised. After this, by following the algorithm pointed out in the proof of Proposition \ref{bp}, we may verify whether the basic subproblem is poised or not. If not then, in view of Theorem \ref{main}, i), we conclude that the given exact problem is not poised either. If the basic subproblem is poised then we may apply the respective reduction, given by Corollaries \ref{3B}, \ref{2112B}, or \ref{212B}. Thus we have a complete solution of the poisedness problem.

\begin{theorem} Assume that $\left\{{{\bar\triangle}},\mathcal A\right\}$ is an exact problem.
Then it has a subproblem of type "$3$", "2+2", or "$2+1+\cdots+1+2,m$". Moreover, by using line-transformations, we either determine that the problem is not poised, or we reduce the node set ${\mathcal A}$ to ${\mathcal B}$ such that the line-equivalent problem $\left\{{{\bar\triangle}},{\mathcal B}\right\}$ has a basic subproblem of type "$3/B$", "$2+2/B$", or "$2+1+\cdots+1+2,m/B$".
\end{theorem}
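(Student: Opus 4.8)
The plan is to prove the two assertions in turn: first the existence of a subproblem of one of the three listed types, and then, using the line transformations of Lemma \ref{2l} together with Theorem \ref{main}, i), the passage to a line-equivalent problem carrying a \emph{basic} such subproblem (or else a certificate of non-poisedness). Throughout, the only tools needed are node counting inside windows and the fact that line-equivalent problems are equivalent.

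For the existence, I would call a window $\bar\triangle_k^m$ \emph{full} if $\#(\mathcal A\cap\bar\triangle_k^m)\ge (m-k+1)+2$, that is, if the subproblem $\{\bar\triangle,\mathcal A\}_k^m$ is exact or overdetermined. The whole strip is full by exactness, so a full window of minimal length exists; fix one, $W=\bar\triangle_k^m$, with $t=m-k+1$ triangles. If $t=1$ then $\Delta_k$ carries at least three nodes and $W$ is a subproblem of type ``$3$''. Suppose $t\ge 2$. The key device is inclusion--exclusion across an interior side: for $k\le i\le m-1$ the windows $\bar\triangle_k^i$ and $\bar\triangle_{i+1}^m$ are proper, hence non-full by minimality, and their node sets overlap exactly in the nodes lying on the shared side $[V_{i+1},V_{i+2}]$. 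Writing $s_i$ for the number of those nodes,
\[
t+2\le \#(\mathcal A\cap W)=\#(\mathcal A\cap\bar\triangle_k^i)+\#(\mathcal A\cap\bar\triangle_{i+1}^m)-s_i\le (t+2)-s_i,
\]
which forces $s_i=0$ and $\#(\mathcal A\cap W)=t+2$. Thus $W$ carries no node on an interior side and is in fact exact. Letting $n_i$ be the number of nodes of $\Delta_i$ (each counted once) and $S_j=\sum_{i=k}^j n_i$, minimality of $W$ gives $S_j\le (j-k)+2$ for every proper prefix and the symmetric bound for every proper suffix; combining them pins down $S_j=(j-k)+2$ for $k\le j\le m-1$, whence $n_k=n_m=2$ and $n_i=1$ for $k<i<m$. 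This is exactly type ``$2+2$'' when $t=2$ and type ``$2+1+\cdots+1+2,m$'' with $m=t-2$ when $t\ge 3$, completing the existence part. I expect this distribution argument, and in particular the inclusion--exclusion step ruling out interior-side nodes, to be the main obstacle.

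For the reduction I would treat the type found above. If it is ``$3$'': an overdetermined triangle makes the problem not poised by Theorem \ref{main}, i); with exactly three nodes, collinearity makes the exact subproblem not poised (a nonzero linear piece vanishes on their common line), so Theorem \ref{main}, i) again applies, while non-collinearity already gives ``$3/B$''. If it is ``$2+2$'' or ``$2+1+\cdots+1+2,m$'', I would apply the line transformation to the pair in the first triangle, moving it to the intersection pair on the sides of $\Delta_k$ without changing the line $\ell$ through it. If $\ell$ misses the neighbor $\Delta_{k+1}$, that is, the intersection pair lands on the left and bottom sides and not on the shared side, then the basic condition holds at this end; otherwise exactly one transformed node lands on the shared side $[V_{k+1},V_{k+2}]$, where it may be read as a node of $\Delta_{k+1}$. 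In the latter case, if $t=2$ then $\Delta_{k+1}$ now carries three nodes, i.e. a ``$3$'' subproblem handled as above; if $t\ge 3$ the window $\bar\triangle_{k+1}^m$ becomes a strictly shorter subproblem of the same family, its new left triangle $\Delta_{k+1}$ holding two nodes, one on the boundary side $[V_{k+1},V_{k+2}]$ of $\bar\triangle_{k+1}^m$ and hence off its interior sides. Doing the same at the last triangle, each non-basic end either produces a ``$3$'' subproblem or strictly shortens the window; since the length cannot decrease indefinitely, the process terminates either in a detection of non-poisedness or in a line-equivalent problem with a basic subproblem of type ``$3/B$'', ``$2+2/B$'', or ``$2+1+\cdots+1+2,m/B$''. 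Degenerate landings, such as a transformed node falling on a vertex or three collinear nodes appearing, are absorbed into the collinear ``$3$'' sub-case.
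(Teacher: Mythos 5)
Your proof is correct, and its second half (the reduction to a basic subproblem) is essentially the paper's own Step 2: apply a line transformation at a non-basic end, observe that the transformed node lands on the shared side, and either obtain a ``$3$'' triangle or a strictly shorter window of the same family, then induct on the length. The genuine difference is in the existence part. The paper first deletes the triangles containing no nodes and those containing a single node on an interior side, partitions the remainder into connected blocks, and compares the node count of the blocks against that of the ``breaks'' to find a block with at least two more nodes than triangles; it then locates the desired subproblem inside that block by a somewhat delicate left-to-right scan (``the first triangle with two nodes not in its right side,'' etc.). You instead take a \emph{minimal} window $\bar\triangle_k^m$ whose subproblem is exact or overdetermined (the whole strip qualifies, so one exists) and split it at each interior side: inclusion--exclusion together with non-fullness of both halves forces, in one stroke, that the window is exact, that no node lies on any of its interior sides, and that the prefix counts are pinned at $(j-k)+2$, giving the $2,1,\dots,1,2$ distribution. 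This is cleaner and replaces both the block/break count and the scan; what it gives up is the paper's more constructive identification of \emph{where} the subproblem sits. One small caveat applies equally to both arguments: when the intersection pair lands exactly on a vertex $V_{k+2}$ (so the transformed node belongs to three triangles), the ``same type with one fewer $1$'' bookkeeping is not literally valid for $t\ge 3$; your remark that degenerate landings are ``absorbed into the collinear `$3$' sub-case'' covers the $t=2$ situation but not this one. Since the paper's proof is silent on the same point, I do not count it as a gap relative to the paper's standard of rigor, but it is worth a sentence if you write this up.
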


\begin{proof} Suppose that a triangle contains more than $3$ nodes, or exactly $3$ collinear nodes. Then the problem obviously is not poised and also we have a type "$3$" subproblem. If a triangle contains exactly $3$ non-collinear nodes then we have a type "$3/B$" basic subproblem.

Now, let us assume that no triangle of ${\bar\triangle}$ contains more than $2$ nodes.

\emph {Step 1.} Let us verify that the problem $\left\{{\bar\triangle}, \mathcal A\right\}$
contains a subproblem of type  "2+2", or "$2+1+\cdots+1+2,m$".

Let us throw away those triangles of ${\bar\triangle}$ which do not contain nodes from $\mathcal A.$ Let us throw away also those triangles of ${\bar\triangle}$ which contain just one node from $\mathcal A$ located in an interior side of ${\bar\triangle}.$
By saying a node is thrown we mean that it does not belong to the remained triangles.

Notice that only a node located in an interior side of the strip can be thrown. Moreover, this can happen only if the both neighboring triangles were thrown, i.e., if it is the only node in the interior side and also the only node in the both neighboring triangles.

Assume that after this operation the connected blocks of remained triangles in $\bar\triangle$ are
\begin{equation}\label{bl}
{\bar\triangle}_{k_1}^{{k_1}'}, {\bar\triangle}_{k_2}^{{k_2}'},\ \ldots,\ {\bar\triangle}_{k_l}^{{k_l}'},
\end{equation}
where $1\le k_1\le k_1'<k_2\le k_2'<\cdots < k_l\le k_l'\le n.$

Now, let us verify that for a block here the number of nodes (from $\mathcal A$) is greater or equal to the number of triangles plus two.

Assume by way of contradiction that for all blocks in \eqref{bl} the number of nodes does not exceed the number of triangles plus one.
Then we get that the number of nodes in all above connected blocks is less than or equal to the number of triangles in all connected pieces plus $l$, where
$l$ is the number of blocks in \eqref{bl}.

Next, consider the breaks, i.e., the dropped blocks. For each break we have that the number of nodes, i.e., number of the thrown nodes in the above mentioned operation, is less than or equal to the number of triangles there minus $1.$
Indeed, a such node (in an interior side of ${\bar\triangle}$) is thrown if both
triangles containing it where thrown. Thus we can assign two thrown triangles to each thrown node. Also note that all the triangles assigned are different.
Hence the number of nodes in all above mentioned breaks is less than or equal to the number of the triangles there minus $l-1$, since there are at least $l-1$ breaks.

Thus we may conclude that the number of nodes in $\bar\triangle$ does not exceed the number of triangles there plus $1 (=l-(l-1)).$
This contradicts the assumption that the problem $\left\{{{\bar\triangle}},\mathcal A\right\}$ is exact.

Now assume, without loss of generality, that in the first block ${\bar\triangle}_{k_1}^{{k_1}'}$ the number of nodes from $\mathcal A$ is greater or equal to the number of triangles there plus two.
Note that if a triangle in this block contains only one node then it is not located in its left or right side, otherwise the triangle would be thrown away before.

Since no triangle contains three nodes, we get that there is a triangle in this block containing two nodes.
Consider the first such triangle $\Delta_k,$ i.e., a such triangle with the minimal subscript $k\ge i_1.$ If both the two nodes here are lying in the right side of the triangle then the next triangle $\Delta_{k+1}$  contains only these two nodes. Now notice that in the triangles from $\Delta_{i_1}$ till
$\Delta_{k+1},$ the number of nodes equals to $k-i_1+2,$ i.e., the number of triangles here. This means that
there is another triangle in the first block, following $\Delta_{k+1},$ containing two nodes, such that at least one node is not lying in the right side of the triangle.

Then consider the first such triangle $\Delta_{k'}.$ If one of the two nodes here is in the right side of the triangle then the next triangle $\Delta_{k'+1}$ also contains two nodes, because otherwise it would be thrown away before. Now notice that in the triangles from $\Delta_{i_1}$ till
$\Delta_{k'+1},$ the number of nodes does not exceeds $k'-i_1+3,$ i.e., the number of triangles here plus $1.$ This means that
there is another triangle in this series, denoted by $\Delta_{l},$  following $\Delta_{k'+1},$ containing two nodes, which do not lie in its right side. 
This will be the first triangle of the desired subproblem.

Now notice that in the triangles from $\Delta_{i_1}$ till
$\Delta_{l},$ the number of nodes equals to $l-i_1+2,$ i.e., the number of triangles here plus $1.$ This means that
there is another triangle in the first block, following $\Delta_{l},$ containing two nodes.
Denote by $\Delta_{l'}$ the first such triangle. This will be the last triangle of the desired subproblem. First notice that each triangle between $\Delta_{l}$ and $\Delta_{l'},$ if there is a such, contains just one node, which, as was mentioned earlier, cannot be located in its left or right side. Therefore no node in  $\Delta_{l'},$ belongs to the left side of the triangle, since otherwise $\Delta_{l'-1},$ would contain a node in its right side. Note also that in this case $\Delta_{l'-1}$ does not coincide with $\Delta_{l},$ since the latter has no node in its
right side.

 Thus the subproblem $\left\{{\bar\triangle}, \mathcal A\right\}_{l}^{l'}$ is of type "2+2", or "$2+1+\cdots+1+2,m$" with $m\ge 1.$

\emph{Step 2.} Next, by using line-transformations, we either reduce  this subproblem to a basic subproblem or determine that the problem $\left\{{{\bar\triangle}},\mathcal A\right\}$ is not poised.

First suppose that the subproblem $\left\{{\bar\triangle}, \mathcal A\right\}_{l}^{l'}$ is of type "$2+2$".
If it is not basic then the line through the two nodes in a triangle intersects its interior side. Then by replacing these two nodes with their intersection pair we will have in the other triangle one more node, i.e., three nodes. This case was considered in the beginning of the proof.

Finally, suppose that the subproblem $\left\{{\bar\triangle}, \mathcal A\right\}_{l}^{l'}$ is of type "$2+1+\cdots+1+2,m$," with $m\ge 1.$
Now, if this subproblem is not basic then in the same way as in the previous case we may reduce it by line transformation to a problem of the same type, where the number of $1'$s equals to $m-1.$ Thus we may complete readily the proof by using induction on $m,$ where
the first step of induction corresponds to the case $m=0$ considered already.
 \end{proof}
\section{\label{7} Final remarks}
\subsection{Some necessary conditions of poisedness}
Consider a problem $\left\{{{\bar\triangle}},\mathcal A\right\},$ where the strip consists of $n$ triangles.
Denote the number of nodes from $\mathcal A$ in the triangles $\Delta_k, \Delta_{k+1},\ldots, \Delta_m,\ 1\le k\le m \le n,$ by $\nu_k^m:$
$$\nu_k^m:=\#{\mathcal A}_k^m.$$
The following proposition gives some necessary conditions of poisedness.
\begin{proposition}\label{nc}
Given a poised problem $\left\{{{\bar\triangle}},\mathcal A\right\},$ where the strip consists of $n$ triangles. Then for each $k$ and $m,\ 1\le k\le m \le n,$ we have that
\begin{equation}\label{abc3}m-k-1\le \nu_k^m\le m-k+3.\end{equation}
Moreover, in the cases $k=1$ and $m=n$ we have stricter inequalities:
\begin{equation}\label{abc1}m\le \nu_1^m\le m+2,\quad
n-k+1\le \nu_k^n\le n-k+3.\end{equation}
\end{proposition}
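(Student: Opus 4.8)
The plan is to obtain the upper bounds first, since they follow almost immediately from the reduction theory, and then derive the lower bounds by a counting argument that plays these upper bounds on the two outer blocks against the exactness identity $\#\mathcal A=n+2$.

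First I would establish the uniform upper bound $\nu_k^m\le m-k+3$ of \eqref{abc3}. The subproblem $\{\bar\triangle,\mathcal A\}_k^m$ has function space $S(\bar\triangle_k^m)$, whose dimension, by the count \eqref{dim1} applied to the substrip $\bar\triangle_k^m$ (which consists of $m-k+1$ triangles), equals $(m-k+1)+2=m-k+3$. Hence if $\nu_k^m>m-k+3$, then the subproblem $\{\bar\triangle,\mathcal A\}_k^m$ is overdetermined, and Theorem \ref{main}, i), forces $\{\bar\triangle,\mathcal A\}$ to be non-poised, contradicting the hypothesis. (Equivalently, one invokes Proposition \ref{0p2}, ii), to produce a node of $\mathcal A_k^m$ with no fundamental function, hence a node of $\mathcal A$ with no fundamental function.) This already contains the upper bounds of \eqref{abc1}, which are just the specializations $k=1$ and $m=n$ of this inequality.

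For the lower bounds I would partition $\mathcal A$ into three disjoint groups: the nodes lying in the middle block $\bar\triangle_k^m$ (there are $\nu_k^m$ of them), the nodes lying in $\Delta_1\cup\cdots\cup\Delta_{k-1}$ but not in $\Delta_k$ (call this count $a$), and the nodes lying in $\Delta_{m+1}\cup\cdots\cup\Delta_n$ but not in $\Delta_m$ (count $b$). The delicate point — and the step I expect to require the most care — is to verify that these three groups are genuinely disjoint and exhaust $\mathcal A$; this hinges on the strip geometry, namely that two triangles $\Delta_i,\Delta_j$ with $i<j$ meet only if $j=i+1$ (a common side) or $j=i+2$ (a common vertex). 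Consequently any node shared between the left region and the middle region must lie on the side $[V_k,V_{k+1}]\subset\Delta_k$, and is therefore counted in $\nu_k^m$ rather than in $a$; symmetrically on the right. Granting this, together with $\#\mathcal A=n+2$ from \eqref{A=n+2}, I obtain the identity $a+b+\nu_k^m=n+2$.

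Finally, I would bound $a$ and $b$ by applying the upper bound already proved to the outer blocks. Since the strict-left nodes form a subset of $\mathcal A_1^{k-1}$, we have $a\le\nu_1^{k-1}\le k+1$, and likewise $b\le\nu_{m+1}^n\le n-m+2$; summing gives $a+b\le n-m+k+3$, whence $\nu_k^m=n+2-(a+b)\ge m-k-1$, which is the lower bound in \eqref{abc3}. The stricter lower bounds in \eqref{abc1} then come for free from the edge cases: when $k=1$ the left region is empty, so $a=0$ and $\nu_1^m=n+2-b\ge m$; when $m=n$ the right region is empty, so $b=0$ and $\nu_k^n=n+2-a\ge n-k+1$.
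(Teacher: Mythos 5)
Your proposal is correct and follows essentially the same route as the paper: the upper bound via the overdetermined subproblem and Theorem \ref{main}, i), and the lower bound by counting against $\#\mathcal A=n+2$ using the upper bounds on the complementary blocks $\bar\triangle_1^{k-1}$ and $\bar\triangle_{m+1}^n$. Your only deviation is cosmetic but welcome: you treat the nodes on shared sides carefully via a genuine disjoint partition (with counts $a\le\nu_1^{k-1}$, $b\le\nu_{m+1}^n$), whereas the paper writes the slightly loose equality $\nu_k^m=n+2-\nu_1^{k-1}-\nu_{m+1}^n$, which is in general only an inequality in the direction needed.
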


\begin{proof} Let us prove first the right inequality in \eqref{abc3}:
\begin{equation}\label{abc4} \nu_k^m\le m-k+3.\end{equation}
Indeed, suppose conversely that $\nu_k^m\ge m-k+4.$ Then the subproblem $\left\{{{\bar\triangle}},\mathcal A\right\}_k^m,$
is overdetermined.
Therefore the problem $\left\{{{\bar\triangle}},\mathcal A\right\},$ in view of Theorem \ref{main}, is not poised, which is a contradiction.

In particular, we get from \eqref{abc4} that
\begin{equation}\label{abc5} \nu_{m+1}^n\le n-m+2,\quad
\nu_1^{k-1}\le k+1.\end{equation}
Now, in view of the relations $\nu_1^m=n+2-\nu_{m+1}^n$ and $\nu_k^n=n+2-\nu_1^{k-1}$
we get the inequalities in the left sides in \eqref{abc1}.

Finally, let us verify the left inequality in \eqref{abc3}. In view of \eqref{abc5} we have
$$\nu_k^m=n+2-\nu_1^{k-1}-\nu_{m+1}^n \ge n+2 - (k+1)-(n-m+2) = m-k-1.$$
\end{proof}
From the particular case $m=k+2$ of \eqref{abc3} we have that
\begin{equation*}\label{abc6}1\le \nu_k^{k+2}\le 5,\ k=1,\ldots, n.\end{equation*}
Therefore, if in an exact problem $\left\{{{\bar\triangle}},\mathcal A\right\}$ some three successive triangles do not contain nodes then the problem is not poised.

Also, in view of \eqref{abc1}, we have that an exact problem is not poised if there are no nodes in the first or the last triangles of the strip.

In the last subsection we consider the case when there are no nodes in two successive triangles which do not include the first or the last triangles of the strip.

\subsection{Reduction "$0+0$"}

\begin{theorem} \label{00} Given an exact problem $\left\{{{\bar\triangle}},\mathcal A\right\},$ where the strip consists of $n$ triangles. Suppose that some two successive triangles $\Delta_k$ and $\Delta_{k+1},$ where $2\le k\le n-2,$ do not contain nodes. Then the problem $\left\{{{\bar\triangle}},\mathcal A\right\}$ is poised if and only if
the both following two reduced problems
\begin{equation}\label{3r}
\left\{{{\bar\triangle},\mathcal A}\right\}_{1}^{k-1},\quad \left\{{{\bar\triangle}},{\mathcal A}\right\}_{k+2}^n,
\end{equation}
are exact and poised.
\end{theorem}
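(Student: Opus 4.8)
The plan is to argue throughout by means of Proposition \ref{0p1}, ii): a problem is poised exactly when it is exact and the only function in the pertinent space vanishing on the node set is $0$. The governing observation, and the heart of the matter, is that the two empty triangles $\Delta_k,\Delta_{k+1}$ act as a \emph{free bridge}. The sub-strip ${\bar\triangle}_k^{k+1}$ has exactly the four vertices $V_k,V_{k+1},V_{k+2},V_{k+3}$, and by the same argument that established \eqref{dim1} these form a poised set for $S({\bar\triangle}_k^{k+1})$; hence any four prescribed values at them extend to a \emph{unique} continuous piecewise linear function on $\Delta_k\cup\Delta_{k+1}$, with no compatibility constraint. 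In particular the values on the right side $[V_k,V_{k+1}]$ of the left block ${\bar\triangle}_1^{k-1}$ and those on the left side $[V_{k+2},V_{k+3}]$ of the right block ${\bar\triangle}_{k+2}^n$ are completely decoupled by the bridge. This is precisely why the reduced problems in \eqref{3r} carry no added boundary nodes (no ``$+2$''), in contrast with Theorem \ref{main}.

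For the direction in which the two reduced problems are assumed exact and poised, I would note first that the whole problem is exact by hypothesis, and then take $s\in S({\bar\triangle})$ with $s\vert_{\mathcal A}=0$ and show $s=0$. Restricting $s$ to ${\bar\triangle}_1^{k-1}$ yields a function vanishing on $\mathcal A_1^{k-1}$; poisedness of the left reduced problem forces it to vanish on the entire left block, in particular $s(V_k)=s(V_{k+1})=0$. Symmetrically $s(V_{k+2})=s(V_{k+3})=0$. By the free-bridge observation, $s$ on $\Delta_k\cup\Delta_{k+1}$ is determined by these four zero values and so vanishes there too; hence $s\equiv 0$, and the problem is poised by Proposition \ref{0p1}, ii).

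For the converse, assuming $\{{\bar\triangle},\mathcal A\}$ poised, I would first establish exactness of the reduced problems. Since $\Delta_k,\Delta_{k+1}$ contain no nodes, $\nu_k^{k+1}=0$ and $\#\mathcal A=\nu_1^{k-1}+\nu_{k+2}^n=n+2$. Applying the upper bound \eqref{abc4} (equivalently \eqref{abc5}) to each block gives $\nu_1^{k-1}\le k+1$ and $\nu_{k+2}^n\le n-k+1$, whose sum is exactly $n+2$; therefore both are equalities, which are precisely the exactness conditions for the two problems in \eqref{3r}. To get poisedness I would argue by contradiction. If, say, the left reduced problem fails, then, being exact, by Proposition \ref{0p1}, ii), there is a nonzero $s_L\in S({\bar\triangle}_1^{k-1})$ with $s_L\vert_{\mathcal A_1^{k-1}}=0$. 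I would extend $s_L$ across the bridge by prescribing $s(V_k)=s_L(V_k),\ s(V_{k+1})=s_L(V_{k+1}),\ s(V_{k+2})=s(V_{k+3})=0$, and set $s\equiv 0$ on the right block; continuity holds edge by edge (the side $[V_k,V_{k+1}]$ matches $s_L$, the side $[V_{k+2},V_{k+3}]$ matches the zero right block), and no node lies in the bridge. The resulting $\tilde s\in S({\bar\triangle})$ is nonzero and vanishes on $\mathcal A$, contradicting poisedness via Proposition \ref{0p1}, ii). The right block is handled symmetrically.

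The only genuinely delicate point is verifying that the free-bridge extension lands in $S({\bar\triangle})$ with the correct continuity across all three interior sides $[V_k,V_{k+1}]$, $[V_{k+1},V_{k+2}]$, $[V_{k+2},V_{k+3}]$; once the decoupling of the two blocks through the empty middle is set up cleanly, everything else is routine bookkeeping with Proposition \ref{0p1}, ii) and the counting in Proposition \ref{nc}. The hypothesis $2\le k\le n-2$ is exactly what guarantees both ${\bar\triangle}_1^{k-1}$ and ${\bar\triangle}_{k+2}^n$ are nonempty, keeping us away from the already-settled case of an empty first or last triangle.
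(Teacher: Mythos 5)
Your proposal is correct and follows essentially the same route as the paper: the forward direction (restrict to the two blocks, conclude vanishing at all four vertices $V_k,\ldots,V_{k+3}$, hence on the empty bridge) is identical to the paper's. For the converse the paper simply invokes Theorem \ref{main}, i) twice --- once to rule out an overdetermined block and once to rule out an exact but non-poised block --- whereas you obtain exactness from the counting bound of Proposition \ref{nc} and poisedness by an explicit extension-by-zero across the bridge; both of your substitutes are sound and amount to inlining the same machinery the paper cites, so there is no gap.
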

\begin{proof}
Let us assume that the both reduced problems in \eqref{3r} are poised. Notice that then the problem $\left\{{{\bar\triangle}},\mathcal A\right\}$ is exact. Now let us prove, by following Proposition \ref{0p1}, ii), that the problem $\left\{{{\bar\triangle}},\mathcal A\right\}$ is poised, too.
Thus, assume that $s\in S({\bar\triangle}),\ s\vert_{\mathcal A}=0.$
From here we get that $s\vert_{{\mathcal A}_{1}^{k-1}}=0$ and $s\vert_{{\mathcal A}_{k+2}^n}=0.$
Since the two subproblems in \eqref{3r} are poised, we conclude that $s$ vanishes on the triangles
$\Delta_i,\ i=1,\ldots,k-1$ and
$\Delta_i,\ i=k+2,k+3,\ldots,n.$
Therefore $s$ vanishes also at the left side of the triangle $\Delta_{k}$ and at the right side of the triangle $\Delta_{k+1}.$
In particular $s$ vanishes at all the vertices of triangles $\Delta_{k}$ and $\Delta_{k+1}.$ Thus it vanishes on these two triangles too.
Hence $s=0.$

Next let us assume that the problem $\left\{{{\bar\triangle}},\mathcal A\right\}$ is poised and prove that the both reduced problems in \eqref{3r} are poised.

Let us show first that the both reduced problems in \eqref{1r} are exact.
There are $k-1$ and $n-k-1$ triangles in the reduced problems \eqref{3r}, respectively.
Therefore for exactness we need $k+1$ and $n-k+1$ nodes, respectively, altogether $n+2$ nodes.
 Now, assume by way of contradiction that a subproblem in \eqref{3r} is not exact. Then a subproblem is overdetermined and the other is underdetermined. Therefore, by Theorem \ref{main}, i), the problem is not poised, which is a contradiction.

Finally, let us show that both the reduced problems in \eqref{3r} are poised.
Assume by way of contradiction that one of them is not poised.
Then again, by Theorem \ref{main}, i), the problem is not poised, which is a contradiction.
 \end{proof}

\noindent Hayk Avdalyan, Hakop Hakopian \vspace{2mm}

\noindent{Department of Informatics and Applied Mathematics\\
Yerevan State University\\
A. Manukyan St. 1\\
0025 Yerevan, Armenia}

\vspace{1mm}

\noindent E-mails: avdalyanhayk@gmail.com, hakop@ysu.am


\begin{thebibliography}{99}

\bibitem{BHS}
B. D. Bojanov, H. Hakopian, A. Sahakian, Spline functions and multivariate
interpolations, Mathematics and its Applications, 248, Kluwer Acad. Publishers
Group, Dordrecht, 1993.

\bibitem{HJZ}
H.~Hakopian, K.~Jetter, and G.~Zimmermann,
{Vandermonde matrices for intersection points of curves},
Ja\'en J.\ Approx.\ {\bf 1} (2009) 67--81.


\bibitem{SW1}
I. J. Schoenberg and A. Whitney, Sur la positivi\'{e} des d\'{e}terminants de translations
des fonctions de fr\'{e}quence de P\'{o}lya avec une application a une probl\`{e}eme
d’interpolation, C. R. Acad. Sci. Paris Ser. A {\bf 228}, 1996–-1998.

\bibitem{SW2}
I. J. Schoenberg and A. Whitney, On P\'{o}lya frequency functions. III. The
positivity of translation determinants with an application to the interpolation problem
by spline curves”, Trans. Amer. Math. Soc. {\bf 74}, 246–-259.




\end{thebibliography}
\end{document}